\newcommand{\cn}{\mathrm{Cn}}
\newcommand{\conp}[1]{\mathrm{Cn}(\{ {#1}\})}
\newcommand{\cons}[1]{\mathrm{Cn}(#1)}
\newcommand{\mb}[1]{\mathbb{#1}}
\newcommand{\mr}[1]{\mathrm{#1}}
\newtheorem{DEF} {Definition}
\newtheorem{OBS} {Observation}
\newtheorem{THE} {Theorem}
\newtheorem{LEM} {Lemma}
\newcommand{\relamod}{(\mathbb{X},\leqq)}
\newcommand{\mini}[1]{\mathbb{X}^{#1}_{<}}
\newcommand{\set}[1]{\mathbb{X}^{#1} }
\newcommand{\Belsome}[1]{\mathfrak{B}^{\vee}{#1}}
\newcommand{\preceqp}{\preceq_{p}}
\newcommand{\precc}{\prec_{c}}
\newcommand{\preceqc}{\preceq_{c}}
\newcommand{\simeqc}{\simeq_{c}}
\newcommand{\cro}{\ast_c}
\newcommand{\rtoo}{\mbox{$\langle \preceq$ $\mr{to}$ $\ast \rangle$}}
\newcommand{\mtod}{\mbox{$\langle \leqq$ $\mr{to}$ $\circ \rangle$}}
\newcommand{\dtoc}{\mbox{$\langle \circ$ $\mr{to}$ $\cro \rangle$}}
\newcommand{\mbtoc}{\mbox{$\langle \preceqc$ $\mr{to}$ $\cro \rangle$}}
\newcommand{\abtob}{\mbox{$\langle \preceq_{\ast}$ $\mr{to}$ $\preceq \rangle$}}
\newcommand{\mbtob}{\mbox{$\langle \preceqc$ $\mr{to}$ $\preceq \rangle$}}
\newcommand{\btomb}{\mbox{$\langle \preceq$ $\mr{to}$ $\preceqc \rangle$}}
\newcommand{\ctomb}{\mbox{$\langle \cro$ $\mr{to}$ $\preceqc \rangle$}}
\newcommand{\taut}{{\scriptstyle \top}}    
\newcommand{\falsum}{{\scriptstyle \perp}}
\begin{document}

\title{Choice revision}
\author{Li Zhang}
\date{}
\institute{}

\maketitle 
\thispagestyle{plain}



\begin{abstract}

\footnotesize
Choice revision is a sort of non-prioritized multiple revision, in which the agent partially accepts the new information represented by a set of sentences. We investigate the construction of choice revision based on a new approach to belief change called descriptor revision. We prove that each of two variants of choice revision based on such construction is axiomatically characterized with a set of plausible postulates, assuming that the object language is finite. Furthermore, we introduce an alternative modelling for choice revision, which is based on a type of relation on sets of sentences, named multiple believability relation. We show without assuming a finite language that choice revision constructed from such relations is axiomatically characterized with the same sets of postulates proposed for the choice revision based on descriptor revision, whenever the relations satisfy certain rationality conditions.
\\
\textbf{Keywords}: choice revision, non-prioritized multiple belief revision, belief change, descriptor revision, multiple believability relation 
\end{abstract}

\section{Introduction}

Belief change\footnote{\, In some literature, the term ``belief revision" is used as a synonym for belief change. In what follows, we use belief revision to refer to a particular kind of belief change.} theory studies how a rational agent changes her belief state when she is exposed to new information. Studies in this field have traditionally had a strong focus on two types of change: \emph{contraction} in which a specified sentence has to be removed from the original belief state, and \emph{revision} in which a specified sentence has instead to be consistently added. This paper is mainly concerned with the latter.

Alchourr\'on, G\"ardenfors and Makinson (AGM) performed the pioneering formal study on these two types of change in their seminal paper \cite{alchourron_logic_1985}. In the AGM theory of belief change, the agent's belief state is represented by  a set of sentences from some formal language $\mathcal{L}$, usually denoted by $K$. The new information is represented by a single sentence in $\mathcal{L}$. Belief revision and contraction on $K$ are formally represented by two operations $\ast$ and $\div$, mapping from a sentence $\varphi$ to a new set of sentences $K \ast \varphi$ and $K \div \varphi$ respectively. \cite{alchourron_logic_1985} postulated some conditions that a rational revision or contraction operation should satisfy, which are called AGM postulates on revision and contraction. 

Furthermore, \cite{alchourron_logic_1985} showed that contraction and revision satisfying AGM postulates could be precisely constructed from a model based on partial meet functions on remainder sets. After that, many alternative models \cite[etc.]{alchourron_logic_1985_safe,grove_two_1988,gardenfors_revisions_1988,hansson_kernel_1994} have been proposed to construct the contraction and revision operations satisfying AGM postulates. Although these models look entirely different on the surface, most of them employ the same select-and-intersect strategy \cite[p. 19]{hansson_descriptor_2017}. For example, in partial meet construction for contraction \cite{alchourron_logic_1985}, a selection is made among remainders and in sphere modelling for revision \cite{grove_two_1988}, a selection is made among possible worlds. The intersection of the selected objects is taken as the outcome of the operation in both cases.

Although the AGM theory has virtually become a standard model of theory change, many researchers are unsatisfied with its settings in several aspects and have proposed several modifications and generalizations to that framework (see \cite{ferme_agm_2011} for a survey). Here we only point out two inadequatenesses of the AGM theory.

On the one hand, in the original AGM model, the input is represented by a single sentence. This is unrealistic since agents often receive more than one piece of information at the same time. In order to cover these cases, we can generalize sentential revision to multiple revision, where the input is a finite or infinite set of sentences. On the other hand, in AGM revision, new information has priority over original beliefs. This is represented by the success postulate: $\varphi \in K \ast \varphi$ for all $\varphi$.  The priority means that the new information will always be entirely incorporated, whereas previous beliefs will be discarded whenever the agent need do so in order to incorporate the new information consistently. This is a limitation of AGM theory since in real life it is a common phenomenon that agents do not accept the new information that they receive or only accept it partially. As a modification, we can drop the success postulate and generalize prioritized revision to non-prioritized belief revision. 

In this contribution, we will put these two generalizations together and consider the so called multiple non-prioritized belief revision. In \cite{falappa_prioritized_2012}, two different kinds of such generalized revision are distinguished:

\begin{enumerate}
\item Merge: $K$ and $A$ are symmetrically treated, i.e., sentences of $K$ and $A$ could be accepted or rejected. 
\item Choice revision\footnote{\, Here we use the term ``choice revision'', introduced by \cite{Fuhrmann_phd}, to replace the term``selective change'' used in \cite{falappa_prioritized_2012}, for it is easier for us to distinguish it from the ``selective revision'' introduced in \cite{ferme_selective_1999}, which is a sort of non-prioritized revision with a single sentence as input. It should be noted that generally choice revision by a finite set $A$ cannot be reduced to selective revision by the conjunction $\& A$ of all elements in $A$. To see this, let $\ast_{s}$ be some selective revision. It is assumed that $\ast_{s}$ satisfies extensionality, i.e. if $\varphi  $ is logically equivalent to $ \psi$, then $K \ast_s \varphi = K \ast_s \psi$. So, $K \ast^\prime \& \{\varphi, \neg \varphi\} = K \ast^\prime \& \{\psi, \neg \psi\}$ for all $\varphi$ and $\psi$. However, it is clearly unreasonable for choice revision $\cro$ that $K \cro \{\varphi, \neg \varphi\} = K \cro \{\psi, \neg \psi\}$ should hold for all $\varphi$ and $\psi$.}: some sentences of A could be accepted, some others could be rejected.
\end{enumerate}

We use $\cro$ to denote a choice revision operation. \cite{falappa_prioritized_2012} investigated the formal properties of merge but left the study on choice revision as future work. As far as we know, little work has been done on this kind of revision in the literature. This fact can be partly explained by that the operation $\cro$ has the unusual characteristic that the standard select-and-intersect approach is not in general applicable. To see why, let the set $K$ of original beliefs not contain any element of $A =\{ \varphi, \neg \varphi \}$. We are going to construct a set $K \cro A$ which incorporates $\varphi$ or its negation. Suppose that we do that by first selecting a collection $\mb{X} =\{X_1, X_2, X_3, \cdots \}$ of sets of beliefs, each of which satisfies the success condition for choice revision with $A$, i.e. $X_i \cap A \neq \emptyset$ for each $X_i$. Then it may be the case that $\varphi \in X_1$ and $\neg \varphi \in X_2$. Given that $X_1$ and $X_2$ are consistent, it follows that the intersection $\cap \mb{X}$ cannot satisfy the success condition, i.e. it contains neither $\varphi$ or $\neg \varphi$.

Therefore, to develop a modelling for choice revision, we need to choose another strategy than the select-and-intersect method. \cite{hansson_descriptor_2013} introduced a new approach of belief change named ``descriptor revision'', which employs a ``select-direct'' methodology: It assumes that there is a set of belief sets as potential outcomes of belief change, and the belief change is performed by a direct choice among these potential outcomes. Furthermore, this is a very powerful framework for constructing belief change operations since success conditions for various types of belief changes are described in a general way with the help of a metalinguistic belief operator $\mathfrak{B}$. For instance, the success condition of contraction by $\varphi$ is $\neg \mathfrak{B} \varphi$, that of revision by $\varphi$ is $\mathfrak{B} \varphi$. Descriptor revision on a belief set $K$ is performed with a unified operator $\circ$ which applies to any success condition that is expressible with $\mathfrak{B}$. Hence, choice revision $\cro$ with a finite input set can be constructed from descriptor revision in the way of $K \cro \{\varphi_1, \varphi_2, \cdots, \varphi_n\} = K \circ \{\mathfrak{B} \varphi_1 \vee \mathfrak{B} \varphi_2 \vee \cdots \vee \mathfrak{B} \varphi_n \}$ \cite[p. 130]{hansson_descriptor_2017}. 

Although the construction of choice revision in the framework of descriptor revision has been introduced in \cite{hansson_descriptor_2017}, the formal properties of this type of belief change are still in need of investigation. The main purpose of this contribution is to conduct such an investigation. After providing some formal preliminaries in Section \ref{Preliminaries}, we will review how to construct choice revision in the framework of descriptor revision in Section \ref{section choice revison based on descriptor revision}. More importantly, in this section, we will investigate the postulates on choice revision which could axiomatically characterize these constructions. In Section \ref{section on an alternative modelling for choice revision} we will propose an alternative modelling for choice revision, which is based on \textit{multiple believability relations}, a generalized version of \textit{believability relation} introduced in \cite{hansson_relations_2014} and further studied in \cite{zhang_believability_2017}. We will investigate the formal properties of this modelling and prove the associated representation theorems.  Section \ref{section conclusion} concludes and indicates some directions for future work. All proofs of the formal results are placed in the appendix.

\section{Preliminaries}\label{Preliminaries}

The object language $\mathcal{L}$ is defined inductively by a set $v$ of propositional variables $\{p_0 ,\,  p_1, \, \cdots, \, p_n, \, \cdots \}$ and the truth-functional operations $\neg, \wedge, \vee$ and $\rightarrow$ in the usual way. $\taut$ is a tautology and $\falsum$ a contradiction. $\mathcal{L}$ is called finite if $v$ is finite. Sentences in $\mathcal{L}$ will be denoted by lower-case Greek letters and sets of such sentences by upper-case Roman letters. 

$\cn$ is a consequence operation for $\mathcal{L}$ satisfying supraclassicality (if $\varphi$ can be derived from $A$ by classical truth-functional logic, then $\varphi \in \cons{A}$), compactness (if $\varphi \in \cons{A}$, then there exists some finite $B \subseteq A$ such that $\varphi \in \cons{B}$) and the deduction property ($\varphi \in \cn(A \cup \{\psi\})$ if and only if $\psi \rightarrow \varphi \in \cn(A)$). $X\vdash \varphi$ and $X \nvdash \varphi$ are alternative notations for $\varphi \in \cn(X)$ and $\varphi \notin \cn(X)$ respectively. $\{ \varphi \} \vdash \psi$ is rewritten as $\varphi \vdash \psi$ for simplicity. And $\varphi \dashv \Vdash \psi$ means $\varphi \vdash \psi$ and $\psi \vdash \varphi$. $A \equiv B$ holds iff for every $\varphi \in A$, there exists some $\psi \in B$ such that $\varphi \dashv \Vdash \psi$ and vice versa. 

For all finite $A$, let $\& A$ denote the conjunction of all elements in $A$. For any $A$ and $B$, $A \owedge B = \{\varphi \wedge \psi \mid \varphi \in A \mbox{ and } \psi \in B\}$. We write $\varphi \owedge \psi$ and $A_0 \owedge A_1 \owedge \cdots \owedge A_n$ instead of $\{ \varphi\} \owedge \{\psi\}$ and $( \cdots ( A_0 \owedge A_1) \owedge \cdots) \owedge A_n$ for simplicity.

The set of beliefs an agent holds is represented by a \emph{belief set}, which is a set $X \subseteq \mathcal{L}$ such that $X = \cn(X)$. $K$ is fixed to denote the original beliefs of the agent. We assume that $K$ is consistent unless stated otherwise.


\section{Choice revision based on descriptor revision}\label{section choice revison based on descriptor revision}

Before investigating the properties of choice revision constructed in the framework of descriptor revision, we first present some formal basics of this framework, which is mainly based on \cite{hansson_descriptor_2013}. 

\subsection{Basics of descriptor revision}

An atomic belief descriptor is a sentence $\mathfrak{B}\varphi$ with $\varphi\in \mathcal{L}$. Note that the symbol $\mathfrak{B}$ is not part of the object language $\mathcal{L}$. A \emph{molecular belief descriptor} is a truth-functional combination of atomic descriptors. A composite belief descriptor (henceforth: descriptor; denoted by upper-case Greek letters) is a set of molecular descriptors. 

$\mathfrak{B}\varphi$ is \emph{satisfied} by a belief set $X$, if and only if $\varphi \in X$. Conditions of satisfaction for molecular descriptors are defined inductively, hence, provided that $\varphi$ and $\psi$ stand for molecular descriptors, $X$ satisfies $\neg \varphi$ if and only if it does not satisfy $\varphi$, it satisfies $\varphi \wedge \psi$ if and only if it satisfies both $\varphi$ and $\psi$, etc. It satisfies a composite descriptor $\Phi$ if and only if it satisfies all its elements. $X \Vdash \Phi$ denotes that set $X$ satisfies descriptor $\Phi$.


Descriptor revision on a belief set $K$ is performed with a unified operator $\circ$ such that $K \circ \Phi$ is an operation with $\Phi$ as its success condition. \cite{hansson_descriptor_2013} introduces several constructions for descriptor revision operations, of which the relational model defined as follows has a canonical status.

\begin{DEF}[\cite{hansson_descriptor_2013}]\label{definition of relational model}


$(\mb{X}, \leqq)$ is a relational select-direct model (in short: relational model) with respect to $K$ if and only if it satisfies:\footnote{\, We will drop the phrase ``with respect to $K$'' if this does not affect the understanding, and write $\set{\varphi}$ and $\mini{\varphi}$ instead of $\set{\{\mathfrak{B} \varphi\}}$ and $\mini{\{\mathfrak{B} \varphi\}}$ for simplicity.}
\begin{enumerate}
\item[]$(\mb{X} 1)$ $\mb{X}$ is a set of belief sets.
\item[]$(\mb{X} 2)$ $K \in \mb{X}$.
\item[]$(\leqq1)$ $K \leqq X$ for every $X \in \mb{X}$.
\item[]$(\leqq2)$ For any $\Phi$, if $\{X \in \mb{X} \mid  X \Vdash \Phi \}$ (we denote it as $ \set{\Phi}$) is not empty, then it has a unique $\leqq$-minimal element denoted by $\mini{\Phi}$.
\end{enumerate}

A descriptor revision $\circ$ on $K$ is \emph{based on} (or \emph{determined by}) some relational model $\relamod$ with respect to $K$ if and only if for any $\Phi$,

\begin{eqnarray}\nonumber
\mtod\footnotemark   \,\,\,\,\,\,\, K \circ \Phi=
   \begin{cases}
   \mini{\Phi} &\mbox{if $\set{\Phi}$ is not empty,}\\
   K &\mbox{otherwise.}
   \end{cases}
\end{eqnarray}

\footnotetext{\, Provided that $\relamod$ is a relational model, $\mb{X} $ is equivalent to the domain of $\leqq$ since $K \in \mb{X}$ and $K \leqq X$ for all $X \in \mb{X}$. So $\leqq$ in itself can represent the $\relamod$ faithfully.}

\end{DEF}

$\mb{X}$ could be seen as an \textit{outcome set} which includes all the potential outcomes under various belief change patterns. The ordering $\leqq$ (with the strict part $<$) brings out a direct-selection mechanism, which selects the final outcome among candidates satisfying a specific success condition. Given condition $(\leqq2)$, this sort of selection is achievable for any success condition satisfiable in $\mb{X}$. We call descriptor revision constructed in this way \emph{relational} descriptor revision.

As \cite{zhang_believability_2017} pointed out, in so far as the selection mechanism is concerned, descriptor revision is at a more abstract level comparing with the AGM revision. In the construction of descriptor revision $\circ$, ``it assumes that there exists an outcome set which contains all the potential outcomes of the operation $\circ$, but it says little about what these outcomes should be like'' \cite[p. 41]{zhang_believability_2017}. In contrast, in the AGM framework, the belief change is supposed to satisfy the principle of consistency preservation and the principle of the informational
economy \cite{gardenfors_knowledge_1988}. Therefore, the intersection step in the construction of belief change in the AGM framework becomes dispensable in the context of descriptor revision. This may explain why the descriptor revision model could be a select-direct approach.

\subsection{Choice revision constructed from descriptor revision}

The success condition for choice revision $\cro$ with a finite input  could be easily expressed by descriptor $\{ \mathfrak{B} \varphi_0 \vee \cdots \vee \mathfrak{B} \varphi_n \}$. So, it is straightforward to construct choice revision through descriptor revision as follows.

\begin{DEF}[\cite{hansson_descriptor_2017}]
Let $\circ$ be some descriptor revision. A choice revision $\cro$ on $K$ is \emph{based on} (or \emph{determined by}) $\circ$ if and only if for any finite set $A$, 

\begin{eqnarray}\nonumber
\dtoc   \,\,\,\,\,\,\, K \cro A= 
\begin{cases}
K \circ \{ \mathfrak{B} \varphi_0 \vee \cdots \vee \mathfrak{B} \varphi_n \} & \mbox{if $A = \{\varphi_0 , \cdots , \varphi_n \} \neq \emptyset$},\\
K & \mbox{otherwise}.
\end{cases}
\end{eqnarray}

\end{DEF}

Henceforth, we say $\cro$ is \emph{based on} (or \emph{determined by}) some relational model if it is based on the descriptor revision determined by the same model. The main purpose of this section is to investigate the formal properties of choice revision based on such models.

\subsection{Postulates and representation theorem}

It is observed that the choice revision determined by relational models should satisfy a set of arguably plausible postulates on choice revision. 

\begin{OBS}\label{Observation on the postulates satisfied by the choice revision constructed from relational model}
Let $\cro$ be a choice revision determined by any relational descriptor revision $\relamod$. Then it satisfies the following postulates: 
\begin{enumerate}
\item[] $\mathrm{(\cro 1)}$ $\cons{K \cro A} = K \cro A$. (\textbf{$\cro$-closure})
\item[] $\mathrm{(\cro 2)}$ $K \cro A = K$ or $A \cap (K \cro A) \neq \emptyset$. (\textbf{$\cro$-relative success})
\item[] $\mathrm{(\cro 3)}$ If $A \cap (K \cro B) \neq \emptyset$, then $A \cap (K \cro A) \neq \emptyset$. (\textbf{$\cro$-regularity}) 
\item[] $\mathrm{(\cro 4)}$ If $A \cap K \neq \emptyset$, then $K \cro A = K$. (\textbf{$\cro$-confirmation})
\item[] $\mathrm{(\cro 5)}$ If $(K \cro A) \cap B \neq \emptyset$ and $(K \cro B) \cap A \neq \emptyset$, then $K \cro A = K \cro B$. (\textbf{$\cro$-reciprocity})
\end{enumerate}
\end{OBS}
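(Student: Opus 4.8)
The plan is to peel back the two layers of construction, $\dtoc$ and $\mtod$, and reduce each postulate to a statement about $\leqq$-minimal elements of the sets $\set{\Phi}$. The one fact I would isolate first is the dictionary between the success condition of choice revision and descriptor satisfaction: writing $\Phi_A$ for the descriptor $\{\mathfrak{B}\varphi_0 \vee \cdots \vee \mathfrak{B}\varphi_n\}$ associated with a nonempty finite $A = \{\varphi_0, \ldots, \varphi_n\}$, a belief set $X$ satisfies $\Phi_A$ exactly when $\varphi_i \in X$ for some $i$, that is, exactly when $A \cap X \neq \emptyset$. Consequently $\set{\Phi_A} = \{X \in \mb{X} \mid A \cap X \neq \emptyset\}$, and by $\dtoc$ and $\mtod$ together, $K \cro A = \mini{\Phi_A}$ whenever some member of $\mb{X}$ meets $A$ and $K \cro A = K$ otherwise (in particular when $A = \emptyset$). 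I would also record once and for all that $K \cro A \in \mb{X}$ always, using $(\mb{X}1)$ and $(\mb{X}2)$.

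With this dictionary, $(\cro 1)$ through $(\cro 4)$ fall out quickly. For $(\cro 1)$, $K \cro A$ is a member of $\mb{X}$, which by $(\mb{X}1)$ is a set of belief sets, hence closed under $\cn$. For $(\cro 2)$, either $\set{\Phi_A}$ is empty (or $A = \emptyset$), giving $K \cro A = K$, or $K \cro A = \mini{\Phi_A}$ satisfies $\Phi_A$ and therefore meets $A$. For $(\cro 3)$, the hypothesis $A \cap (K \cro B) \neq \emptyset$ says $K \cro B \in \set{\Phi_A}$, so $\set{\Phi_A}$ is nonempty and $K \cro A = \mini{\Phi_A}$ again meets $A$. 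Postulate $(\cro 4)$ is the first to call on $(\leqq1)$: if $A \cap K \neq \emptyset$ then $K \in \set{\Phi_A}$ by $(\mb{X}2)$, and since $(\leqq1)$ makes $K$ sit $\leqq$-below every member of $\mb{X}$, it is a $\leqq$-minimal element of $\set{\Phi_A}$; the uniqueness clause of $(\leqq2)$ then identifies it as $\mini{\Phi_A}$, so $K \cro A = K$.

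The reciprocity postulate $(\cro 5)$ is the one I expect to be the real obstacle, since it alone forces me to compare the two distinct outcomes $K \cro A$ and $K \cro B$. Reading the two hypotheses through the dictionary, both $\set{\Phi_A}$ and $\set{\Phi_B}$ are nonempty, so $K \cro A = \mini{\Phi_A}$ and $K \cro B = \mini{\Phi_B}$; moreover $(K \cro A) \cap B \neq \emptyset$ places $K \cro A$ in $\set{\Phi_B}$ and $(K \cro B) \cap A \neq \emptyset$ places $K \cro B$ in $\set{\Phi_A}$. Comparing $K \cro B = \mini{\Phi_B}$ against the rival member $K \cro A$ of $\set{\Phi_B}$ yields $K \cro B \leqq K \cro A$, and symmetrically $K \cro A \leqq K \cro B$. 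The delicate point is that $\leqq$ need not be antisymmetric, so mutual $\leqq$ does not by itself deliver equality; I would instead promote the comparison to minimality, arguing that $K \cro B$, already a member of $\set{\Phi_A}$ lying $\leqq$-below $\mini{\Phi_A}$, is itself a $\leqq$-minimal element of $\set{\Phi_A}$ (any strict predecessor of $K \cro B$ in $\set{\Phi_A}$ would, passing through $K \cro B \leqq \mini{\Phi_A}$, also strictly precede $\mini{\Phi_A}$ and contradict its minimality). Then the uniqueness in $(\leqq2)$ collapses $K \cro B$ and $\mini{\Phi_A} = K \cro A$ into one set. It is this minimality-plus-uniqueness maneuver, rather than any computation, that carries the proof.
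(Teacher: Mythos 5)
Your proposal is correct and takes essentially the same route as the paper's own proof: the same dictionary ($X \Vdash \Belsome{A}$ iff $A \cap X \neq \emptyset$), the same direct verifications of $(\cro 1)$--$(\cro 4)$ from $(\mb{X}1)$, $(\mb{X}2)$, $(\leqq 1)$ and $(\leqq 2)$, and the same treatment of $(\cro 5)$ by placing each of $K \cro A$ and $K \cro B$ in the other's satisfying set and appealing to the uniqueness clause of $(\leqq 2)$. Your minimality-plus-uniqueness maneuver for $(\cro 5)$ merely spells out explicitly the step the paper compresses into ``since the minimal element in $\set{\Belsome{A}}$ is unique, the two minima coincide.''
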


Moreover, another plausible condition on choice revision follows from this set of postulates. 

\begin{OBS}\label{Observation on cro-syntax irrelevance}
If $\cro$ satisfies $\cro$-closure, relative success, regularity and reciprocity, then $\cro$ satisfies:
\begin{enumerate}
\item[]  If $A \equiv B$, then $K \cro A = K \cro B$. (\textbf{$\cro$-syntax irrelevance})
\end{enumerate}
\end{OBS}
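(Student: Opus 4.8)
The plan is to reduce everything to one elementary fact about belief sets, combined with a short case analysis driven by $\cro$-relative success. The elementary fact is this: whenever $A \equiv B$ and $X$ is a belief set, then $A \cap X \neq \emptyset$ if and only if $B \cap X \neq \emptyset$. Indeed, if $\varphi \in A \cap X$, then by $A \equiv B$ there is some $\psi \in B$ with $\varphi \dashv \Vdash \psi$; since $X = \cons{X}$ and $\psi \in \cons{\{\varphi\}} \subseteq \cons{X}$, we obtain $\psi \in B \cap X$, and the converse direction is symmetric. I would apply this fact repeatedly to the belief sets $K$, $K \cro A$ and $K \cro B$, all of which are closed under $\cn$ by $\cro$-closure.

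With this in hand, fix $A \equiv B$ and split on whether the revised set meets the input. First suppose $A \cap (K \cro A) \neq \emptyset$. Because $K \cro A$ is a belief set, the elementary fact gives $(K \cro A) \cap B \neq \emptyset$. Applying $\cro$-regularity with the roles of the two sets interchanged (reading $B$ for ``$A$'' and $A$ for ``$B$'' in the postulate) turns $B \cap (K \cro A) \neq \emptyset$ into $B \cap (K \cro B) \neq \emptyset$; using the elementary fact once more on the belief set $K \cro B$ yields $(K \cro B) \cap A \neq \emptyset$. Now both hypotheses of $\cro$-reciprocity are available, so $K \cro A = K \cro B$. By the symmetric argument, the case $B \cap (K \cro B) \neq \emptyset$ likewise gives $K \cro A = K \cro B$.

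The only remaining case is $A \cap (K \cro A) = \emptyset$ together with $B \cap (K \cro B) = \emptyset$; here $\cro$-relative success forces $K \cro A = K$ and $K \cro B = K$, so again $K \cro A = K \cro B$. Since these cases are exhaustive, the postulate follows. I do not expect a genuine obstacle: the one point that deserves care is the precise instantiation of $\cro$-regularity (it is stated for an ordered pair of sets and must be used with that pair reversed), together with the recurring but routine appeal to closure under $\cn$ to pass between $A$ and $B$ inside a belief set. It is worth noting that the argument never invokes $\cro$-confirmation, consistent with its absence from the hypotheses of the observation.
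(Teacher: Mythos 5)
Your proof is correct and follows essentially the same route as the paper's: a case split on whether the input meets the revised set, with $\cro$-closure transferring membership between $A$ and $B$ inside the belief sets $K \cro A$ and $K \cro B$, $\cro$-regularity and $\cro$-reciprocity yielding $K \cro A = K \cro B$ in the non-empty case, and $\cro$-relative success collapsing both to $K$ otherwise. Your only (harmless) deviation is organizing the exhaustive split as three cases where the paper uses two, the paper instead deriving $B \cap (K \cro B) = \emptyset$ from $A \cap (K \cro A) = \emptyset$ via the contrapositive of regularity plus closure.
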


It is easy to see that the postulates in above are natural generalizations of the following postulates on sentential revision:

\begin{enumerate}
\item[] $\mathrm{(\ast 1)}$ $\cn(K \ast \varphi)=K \ast \varphi$ \textit{\textbf{($\ast$-closure)}}
\item[] $\mathrm{(\ast 2)}$  If $K \ast \varphi \neq K $, then $\varphi \in K \ast \varphi$ \textit{\textbf{($\ast$-relative success)}}
\item[] $\mathrm{(\ast 3)}$  If $\varphi \in K $, then $K \ast \varphi = K$ \textit{\textbf{($\ast$-confirmation)}}
\item[] $\mathrm{(\ast 4)}$  If $\psi \in K \ast \varphi$, then $\psi \in K \ast \psi$ \textit{\textbf{($\ast$-regularity)}}
\item[] $\mathrm{(\ast 5)}$  If $\psi \in K \ast \varphi$ and $\varphi \in K \ast \psi$, then $K\ast \varphi = K \ast \psi$ \textit{\textbf{($\ast$-reciprocity)\footnote{\, This postulate is first discussed in \cite{alchourron1982logic} in the context of maxichoice revision.}}}
\end{enumerate}
and 
\begin{enumerate}
\item[] If $\varphi \dashv \Vdash \psi$, then $K\ast \varphi = K \ast \psi$. \footnote{\, It is easy to check that $\ast$-extensionality is derivable from $(\ast 1)$, $(\ast 2)$, $(\ast 3)$ and ($\ast 5$).} \textit{\textbf{($\ast$-extensionality)}}
\end{enumerate}

The above postulates on choice revision are as intuitively plausible as their correspondents on sentential revision, except that the meaning of $\cro$-reciprocity seems not so transparent as that of $\ast$-reciprocity. However, given some weak conditions, we can show that the $\cro$-reciprocity postulate is equivalent to a more understandable condition as follows.

\begin{OBS}\label{Observation that reciprocity is equivalent to cautiousness}
Let choice operation $\cro$ satisfy $\cro$-relative success and \linebreak $\cro$-regularity. Then it satisfies $\cro$-reciprocity iff it satisfies:
\begin{enumerate}
\item[] If $A \subseteq B$ and $(K \cro B) \cap A \neq \emptyset$, then $K \cro A =K \cro B$. (\textbf{$\cro$-cautiousness})
\end{enumerate} 
\end{OBS}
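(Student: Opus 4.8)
The plan is to prove the two implications separately, using $\cro$-regularity as the main workhorse and, for the harder direction, a set-union construction as the central device.

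For the direction from $\cro$-reciprocity to $\cro$-cautiousness, I would argue directly. Suppose $A \subseteq B$ and $(K \cro B) \cap A \neq \emptyset$. Reading $(K \cro B) \cap A \neq \emptyset$ as $A \cap (K \cro B) \neq \emptyset$, $\cro$-regularity immediately yields $A \cap (K \cro A) \neq \emptyset$. Since $A \subseteq B$, the witnessing sentence also lies in $B$, so $(K \cro A) \cap B \neq \emptyset$. Both premises of $\cro$-reciprocity, namely $(K \cro A) \cap B \neq \emptyset$ and $(K \cro B) \cap A \neq \emptyset$, are now in hand, and reciprocity delivers $K \cro A = K \cro B$.

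For the converse, from $\cro$-cautiousness to $\cro$-reciprocity, I would introduce the auxiliary set $C = A \cup B$, so that $A \subseteq C$ and $B \subseteq C$, aiming to prove $K \cro A = K \cro C = K \cro B$. The first task is to establish $(K \cro C) \cap C \neq \emptyset$. Starting from $(K \cro B) \cap A \neq \emptyset$, regularity gives $(K \cro A) \cap A \neq \emptyset$; since $A \subseteq C$ this upgrades to $(K \cro A) \cap C \neq \emptyset$, and a second application of regularity (with $C$ in the role of the first set and $A$ in the role of the second) produces $(K \cro C) \cap C \neq \emptyset$. Because $C = A \cup B$, the witnessing sentence lies in $A$ or in $B$, so either $(K \cro C) \cap A \neq \emptyset$ or $(K \cro C) \cap B \neq \emptyset$. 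In the first case, cautiousness (using $A \subseteq C$) gives $K \cro A = K \cro C$; feeding the hypothesis $(K \cro A) \cap B \neq \emptyset$ through this equality yields $(K \cro C) \cap B \neq \emptyset$, whence cautiousness (using $B \subseteq C$) gives $K \cro B = K \cro C$, and the two equalities combine to $K \cro A = K \cro B$. The second case is symmetric, bridging instead through the hypothesis $(K \cro B) \cap A \neq \emptyset$.

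The hard part will be this converse direction, and in particular the need to guarantee $(K \cro C) \cap C \neq \emptyset$ before cautiousness can be invoked: a priori this intersection could be empty, and $\cro$-relative success alone would then only force $K \cro C = K$, which is not enough to run the chaining argument. The two-step application of regularity is exactly what secures nonemptiness, after which the case split on whether the witness falls in $A$ or in $B$ is routine. I expect $\cro$-relative success to play at most an auxiliary role, since the interplay of regularity and cautiousness already carries both directions.
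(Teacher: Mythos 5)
Your proposal is correct and follows essentially the same route as the paper: the forward direction via one application of $\cro$-regularity plus the upgrade $A \subseteq B$, and the converse via the bridge set $A \cup B$ with a case split and two applications of $\cro$-cautiousness. The only cosmetic difference is that you secure $(K \cro (A\cup B)) \cap (A \cup B) \neq \emptyset$ with two applications of regularity, where the paper gets it in one step from $(A \cup B) \cap (K \cro B) \neq \emptyset$; your observation that $\cro$-relative success is never actually needed also matches the paper's proof.
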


\noindent The postulate $\cro$-cautiousness reflects a distinctive characteristic of choice revision modelled by relational models: The agent who performs this sort of belief change is cautious in the sense of only adding the smallest possible part of the new information to her original beliefs. It follows immediately from $\cro$-relative success and $\cro$-cautiousness that if $A \cap (K \cro A) \neq \emptyset$, then $K \cro A = K \cro \{\varphi\}$ for some $\varphi \in A$. Thus, it is not surprising that the following postulate follows.

\begin{OBS}\label{Observation that dichotomy can be derived from reciprocity}
If $\cro$ satisfies $\cro$-relative success, regularity and reciprocity, then $\cro$ satisfies:
\begin{enumerate}
\item[] $K \cro (A \cup B) = K \cro A$ or $K \cro (A \cup B) = K \cro B$. (\textbf{$\cro$-dichotomy})
\end{enumerate}
\end{OBS}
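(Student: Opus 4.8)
The plan is to write $C = A \cup B$ and to show that $K \cro C$ must coincide with $K \cro A$ or with $K \cro B$. Since we assume $\cro$-relative success, regularity and reciprocity, Observation~\ref{Observation that reciprocity is equivalent to cautiousness} lets me also assume $\cro$-cautiousness, which will be the main lever: because $A \subseteq C$ and $B \subseteq C$, cautiousness says that whenever the outcome $K \cro C$ actually meets one of the two pieces, revision by that piece agrees with revision by the whole.

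Concretely, I would split into cases according to whether $(K \cro C) \cap A$ and $(K \cro C) \cap B$ are empty. If $(K \cro C) \cap A \neq \emptyset$, then since $A \subseteq C$, $\cro$-cautiousness immediately yields $K \cro A = K \cro C$, giving the first disjunct; symmetrically, if $(K \cro C) \cap B \neq \emptyset$ we obtain $K \cro B = K \cro C$. So the only remaining case is the degenerate one in which $(K \cro C) \cap A = \emptyset$ and $(K \cro C) \cap B = \emptyset$, whence $(K \cro C) \cap C = \emptyset$ and, by $\cro$-relative success, $K \cro C = K$ with $K \cap C = \emptyset$.

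This degenerate case is where the real work lies. Here I would argue that $K \cro A = K$ (so it again equals $K \cro C$). Suppose not; then by $\cro$-relative success $A \cap (K \cro A) \neq \emptyset$, and since $A \subseteq C$ this gives $C \cap (K \cro A) \neq \emptyset$. Applying $\cro$-regularity, with $C$ in the role of the outer set and $A$ as the revised set, yields $C \cap (K \cro C) \neq \emptyset$, i.e. $C \cap K \neq \emptyset$, contradicting $K \cap C = \emptyset$. Hence $K \cro A = K = K \cro C$, and the disjunction holds in every case.

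I expect the main obstacle to be recognizing that the degenerate case does not break the dichotomy: the temptation is to think that when $K \cro C = K$ the two revisions $K \cro A$ and $K \cro B$ could both differ from $K$, but regularity forbids one piece from ``succeeding'' while the whole revision trivializes. Making that step precise, by lifting the non-empty intersection from the subset $A$ up to $C$ via regularity and then colliding it with relative success, is the crux; the remaining cautiousness arguments are routine.
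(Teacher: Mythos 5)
Your proof is correct and follows essentially the same route as the paper: the identical case split on whether $K \cro (A \cup B)$ meets $A$, $B$, or neither, with the degenerate case resolved by the same regularity/relative-success collision (yours by contradiction, the paper's by contraposition). Your only packaging difference is invoking $\cro$-cautiousness via Observation~\ref{Observation that reciprocity is equivalent to cautiousness}, whose own proof consists of exactly the regularity-then-reciprocity steps the paper inlines, so this is a legitimate but cosmetic shortcut rather than a different argument.
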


\noindent In contrast to $(\cro 1)$ through $(\cro 5)$, postulates $\cro$-cautiousness and $\cro$-dichotomy do not have directly corresponding postulates in the context of sentential revision. This suggests that though $(\cro 1)$ through $(\cro 5)$ naturally generalize $(\ast 1)$ through $(\ast 5)$, this sort of generalization is not so trivial as we may think of. As another evidence for this, the following observation shows that the properties of $(\ast 1)$ through $(\ast 5)$ and those of their generalizations are not always paralleled.

\begin{OBS}\label{Observation that reciprocity is equivalent to strong reciprocity}
Let $\cro$ satisfy $\cro$-regularity. Then it satisfies $\cro$-reciprocity iff it satisfies 
\begin{enumerate}
\item[] For any $n \geq 1 $, if $(K \cro A_1 ) \cap A_0 \neq \emptyset$, $\cdots$, $(K \cro A_{n} ) \cap A_{n-1} \neq \emptyset$, $(K \cro A_{0} ) \cap A_{n} \neq \emptyset$, then $K \cro A_0 = K \cro A_1 = \cdots = K \cro A_n$. (\textbf{$\cro$-strong reciprocity})
\end{enumerate}
\end{OBS}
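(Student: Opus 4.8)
The plan is to treat the two directions separately, since they are of very different difficulty. The direction from $\cro$-strong reciprocity to $\cro$-reciprocity is immediate: instantiating strong reciprocity at $n=1$ yields exactly $\cro$-reciprocity, because the two hypotheses $(K\cro A_1)\cap A_0\neq\emptyset$ and $(K\cro A_0)\cap A_1\neq\emptyset$ are precisely those of reciprocity with $A=A_0$ and $B=A_1$. This direction needs neither induction nor regularity, so all the work lies in the converse: assuming $\cro$-regularity and $\cro$-reciprocity, I must derive the full cyclic condition.

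For the converse, fix sets $A_0,\dots,A_n$ satisfying the cyclic hypotheses $(K\cro A_{i+1})\cap A_i\neq\emptyset$ for $i<n$ together with $(K\cro A_0)\cap A_n\neq\emptyset$. The key device is to pass to the union $U=A_0\cup\cdots\cup A_n$, which will serve as a common ``upper bound'' for the entire cycle. Since each $A_i\subseteq U$, every cyclic hypothesis upgrades to $U\cap(K\cro A_j)\neq\emptyset$ for all $j$. In particular $U\cap(K\cro A_0)\neq\emptyset$, so $\cro$-regularity gives $U\cap(K\cro U)\neq\emptyset$; hence some sentence of $U$, and therefore of some $A_k$, lies in $K\cro U$, i.e. $A_k\cap(K\cro U)\neq\emptyset$. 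Combining this with $U\cap(K\cro A_k)\neq\emptyset$ and applying $\cro$-reciprocity yields the first equality $K\cro A_k=K\cro U$.

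From this seed I would propagate the equality backwards around the cycle. Suppose $K\cro A_j=K\cro U$ has been established. The cyclic hypotheses provide an edge into $A_j$ from its predecessor, namely $A_{j-1}\cap(K\cro A_j)\neq\emptyset$ (reading $A_{-1}$ as $A_n$ for the closing edge). Rewriting $K\cro A_j$ as $K\cro U$ gives $A_{j-1}\cap(K\cro U)\neq\emptyset$, and together with the already-available $U\cap(K\cro A_{j-1})\neq\emptyset$, $\cro$-reciprocity delivers $K\cro A_{j-1}=K\cro U$. Iterating once around the cycle shows $K\cro A_j=K\cro U$ for every $j$, whence $K\cro A_0=\cdots=K\cro A_n$, as required.

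The step I expect to be the genuine obstacle — and the reason a naive induction on the cycle length stalls — is that a bare cycle contains no two-element loop, so $\cro$-reciprocity cannot be triggered directly between any adjacent pair $A_i,A_{i+1}$. Recognizing that passing to $U$ manufactures the missing reverse edges (every $A_j$ reaches $U$ once we know that $U$'s own revision is met by some member of $U$) is the crux; after that, the propagation is routine, relying only on the fact that equal revision outcomes may be substituted on the right-hand side of an intersection condition. I would finally verify that the argument invokes no hidden nonemptiness assumption — it manipulates only the nonemptiness of the displayed intersections and the equalities produced by reciprocity — so that degenerate cases, including those where some $K\cro A_i$ collapses to $K$, are absorbed uniformly.
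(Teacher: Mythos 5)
Your proof is correct, and it completes the hard direction by a genuinely different --- and in fact simpler --- route than the paper's. The two arguments share the same seed: pass to the union $U = A_0 \cup \cdots \cup A_n$, note that each cyclic hypothesis upgrades to $U \cap (K \cro A_j) \neq \emptyset$ for every $j$, use $\cro$-regularity to obtain $U \cap (K \cro U) \neq \emptyset$, and apply $\cro$-reciprocity once to tie $K \cro U$ to some $K \cro A_k$. From there the paper proceeds by induction on the cycle length $n$: within the inductive step it establishes $K \cro A_j = K \cro A = K \cro A_{j-1}$ for the distinguished index, contracts the loop to one of length $k$, and invokes the induction hypothesis. You instead observe that the very reciprocity step the paper uses once for the predecessor can simply be iterated: once $K \cro A_j = K \cro U$ is known, the incoming edge $A_{j-1} \cap (K \cro A_j) \neq \emptyset$ (indices read cyclically) rewrites to $A_{j-1} \cap (K \cro U) \neq \emptyset$, which together with the globally available $U \cap (K \cro A_{j-1}) \neq \emptyset$ yields $K \cro A_{j-1} = K \cro U$, so the equality sweeps backwards around the whole cycle in one pass. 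This removes the induction on $n$ and with it the contraction step, which is the fiddliest part of the paper's proof (one must check that deleting a node whose outcome equals its neighbour's really produces a shorter cycle whose hypotheses still hold); your version handles all $n$ uniformly at the cost of nothing. Your treatment of the easy direction (instantiating strong reciprocity at $n=1$) coincides with the paper's, and your closing sanity check is sound --- the displayed intersection conditions already force each $A_i$ to be nonempty, so no degenerate case needs separate handling.
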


\noindent $\ast$-strong reciprocity is a generalization of the following postulate on sentential revision:

\begin{enumerate}
\item[] For any $n \geq 1 $, if  $\varphi_0 \in K \star \varphi_{1}$, $\cdots$, $\varphi_{n-1} \in K \ast \varphi_n$ and $\varphi_n \in K \star \varphi_0 $, then $K \star \varphi_0 = K \star \varphi_2= \cdots = K \star \varphi_n$. (\textbf{$\ast$-strong reciprocity} )\footnote{\, $\ast$-strong reciprocity 
is closely related to a non-monotonic reasoning rule named ``loop'' which is first introduced  in \cite{kraus_nonmonotonic_1990}. For more discussion on this, see \cite{makinson_relations_1991}.}
\end{enumerate}

\noindent However, in contrast to the result in Observation \ref{Observation that reciprocity is equivalent to strong reciprocity}, $\ast$-strong reciprocity is not derivable from $(\ast 1)$ through $(\ast 5)$.\footnote{\, To see this, let $K = \conp{\taut}$ and revision operation $\ast$ on $K$ defined as: (i) if $p_0 \wedge p_1 \vdash \varphi$ and $\varphi \vdash p_0$, then $K \ast \varphi = \conp{p_0 \wedge p_1}$; (ii) if $p_1 \wedge p_2 \vdash \varphi$ and $\varphi \vdash p_1$, then $K \ast \varphi = \conp{p_1 \wedge p_2}$; (iii) if $p_0 \wedge p_2 \vdash \varphi$ and $\varphi \vdash p_2$, then $K \ast \varphi = \conp{p_0 \wedge p_2}$; (iv) otherwise, $K \ast \varphi = \conp{\varphi}$. It is easy to check that $\ast$ satisfies $(\ast 1)$ through $(\ast 5)$ but not $\ast$-strong reciprocity.}

After an investigation on the postulates $(\cro 1)$ through $(\cro 5)$ satisfied by choice revision based on rational models, the question raises naturally whether the choice revision could be axiomatically characterized by this set of postulates. We get a partial answer to this question: a representation theorem is obtainable when $\mathcal{L}$ is finite.

\begin{THE}\label{Representation theorem for choice revision derived from descriptor revision of finite language}
Let $\mathcal{L}$ be a finite language. Then, $\cro$ satisfies $(\cro 1)$ through $(\cro 5)$ iff it is a choice revision based on some relational model.
\end{THE}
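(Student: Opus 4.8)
The plan is to prove both directions of the biconditional. The soundness direction (``only if'' $\Rightarrow$ ``if'') is already handled: Observation~\ref{Observation on the postulates satisfied by the choice revision constructed from relational model} establishes that any $\cro$ based on a relational model satisfies $(\cro 1)$ through $(\cro 5)$, so I only need the completeness direction. That is, assuming $\mathcal{L}$ is finite and $\cro$ satisfies the five postulates, I must \emph{construct} a relational model $\relamod$ with respect to $K$ such that the choice revision determined by it coincides with the given $\cro$ on every finite input set $A$. Since $\mathcal{L}$ is finite, there are only finitely many belief sets (up to logical equivalence) and only finitely many distinct inputs up to $\equiv$, which is what makes the construction tractable.

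First I would define the candidate outcome set $\mb{X}$. The natural choice is to collect the original belief set together with all actual outputs of the operation: set $\mb{X} = \{K\} \cup \{ K \cro A \mid A \subseteq \mathcal{L} \text{ finite}\}$. By $(\cro 1)$ each such $K \cro A$ is logically closed, giving $(\mb{X}1)$, and $(\mb{X}2)$ holds by construction. Next I would define the relation $\leqq$ so that the $\leqq$-minimal element among the sets satisfying a descriptor encodes exactly the choice made by $\cro$. The guiding identity is $\dtoc$: I need $K \circ \{\mathfrak{B}\varphi_0 \vee \cdots \vee \mathfrak{B}\varphi_n\} = K \cro A$ whenever $A = \{\varphi_0,\dots,\varphi_n\}$. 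The key move is to reduce arbitrary descriptors $\Phi$ to the disjunctive form arising from choice revision: since $\mathcal{L}$ is finite, every satisfiable descriptor's success condition can, on the finite outcome set, be matched against finitely many inputs, so I would define $X \leqq Y$ to hold when $X = K$, or more generally by ordering outputs according to which inputs ``reach'' them. A clean way is to stipulate $K \cro A \leqq K \cro B$ iff there is a finite input $C$ with $K \cro C = K \cro B$ and $A \cap (K \cro C) \neq \emptyset$, then take the reflexive-transitive-consistent closure, using the postulates to verify this is well behaved.

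The main work, and the step I expect to be the hardest, is verifying condition $(\leqq2)$: that whenever $\set{\Phi}$ is nonempty it has a \emph{unique} $\leqq$-minimal element, and that this minimal element is precisely $K \cro A$ for the corresponding $A$. Uniqueness is where the postulates must be deployed in force. I expect $\cro$-reciprocity $(\cro 5)$ to be the critical ingredient for uniqueness: if two distinct outputs $K \cro A$ and $K \cro B$ were both $\leqq$-minimal for the same success condition, reciprocity (together with the derived $\cro$-dichotomy of Observation~\ref{Observation that dichotomy can be derived from reciprocity} and $\cro$-strong reciprocity of Observation~\ref{Observation that reciprocity is equivalent to strong reciprocity}) should force them to be equal. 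The strong reciprocity formulation is especially useful here because it rules out ``cycles'' among outputs, which is exactly what antisymmetry of the induced minimal-element selection requires. I would use $\cro$-regularity $(\cro 3)$ to guarantee that the success condition is genuinely met by the chosen output (so that $A \cap (K \cro A) \neq \emptyset$ whenever the operation is non-vacuous), and $\cro$-relative success $(\cro 2)$ to handle the vacuous case where $\set{\Phi}$ contains only $K$.

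Finally I would check that the descriptor revision $\circ$ determined by $\relamod$ via $\mtod$, composed with $\dtoc$, returns exactly $\cro$. For nonempty $A$ this amounts to showing $\mini{\{\mathfrak{B}\varphi_0 \vee \cdots \vee \mathfrak{B}\varphi_n\}} = K \cro A$, which follows once $(\leqq2)$ is established with the minimal element identified as above; the empty-input and relative-success cases reduce to $K$ by $(\cro 2)$ and the definition of $\leqq$. Throughout, $\cro$-syntax irrelevance (Observation~\ref{Observation on cro-syntax irrelevance}) ensures the construction is well defined on equivalence classes of inputs, so that the finitely many distinct outputs are what the relation must order. The finiteness of $\mathcal{L}$ is essential precisely because it guarantees $\mb{X}$ is finite, so that $\leqq$-minimal elements exist without any further limit or compactness argument.
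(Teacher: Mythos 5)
Your overall architecture matches the paper's: the same outcome set $\mb{X}$ of actual outputs (the paper's $\mb{X}=\{K \cro A \mid A \mbox{ finite}\}$ already contains $K$ via $K \cro \emptyset = K$), essentially the same base relation (your one-step relation closed under transitivity is exactly the paper's chain relation $\leqq^{\prime}$), antisymmetry via $\cro$-strong reciprocity (Observation \ref{Observation that reciprocity is equivalent to strong reciprocity}), syntax irrelevance (Observation \ref{Observation on cro-syntax irrelevance}) to make $\mb{X}$ finite, and the same two-case verification that the induced operation returns $\cro$; your soundness direction, citing Observation \ref{Observation on the postulates satisfied by the choice revision constructed from relational model}, is also the paper's. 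But there is a genuine gap at the decisive condition $(\leqq 2)$. By Definition \ref{definition of relational model} it must hold for \emph{every} descriptor $\Phi$, not only for the disjunctive descriptors $\mathfrak{B}\varphi_0 \vee \cdots \vee \mathfrak{B}\varphi_n$ that choice revision produces, and your claim that on a finite outcome set every satisfiable descriptor can be ``matched against finitely many inputs'' is unjustified and false in general. Take $\Phi = \{\neg\mathfrak{B}p\}$ with $p \notin K$: then $K \in \set{\Phi}$, so any candidate input $A$ with $\set{\Belsome{A}} = \set{\Phi}$ must intersect $K$; but nothing in the postulates excludes from $\mb{X}$ an output containing both $p$ and that element of $A \cap K$, which would satisfy $\Belsome{A}$ while violating $\Phi$. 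Consequently your reciprocity-based uniqueness argument has nothing to grip: two elements $X = K \cro A$ and $Y = K \cro B$ of such a $\set{\Phi}$ with $A \cap Y = B \cap X = \emptyset$ are incomparable under the chain relation and can both be minimal, and reciprocity, dichotomy and strong reciprocity only identify outputs along chains of successful inputs --- they cannot manufacture the missing comparabilities.

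The paper closes exactly this hole with a step your proposal omits: after proving that $\leqq^{\prime}$ is a partial order (reflexivity from $\cro$-confirmation and $\cro$-closure via $\taut$, transitivity by construction, antisymmetry from strong reciprocity), it extends $\leqq^{\prime}$ to a \emph{linear} order $\leqq$ on $\mb{X}$ by the order extension principle. Since $\mb{X}$ is finite --- this is where finiteness of $\mathcal{L}$ plus syntax irrelevance do their real work --- the linear $\leqq$ is a well-order, so $(\leqq 2)$ holds for all $\Phi$ at once, with no case analysis on the shape of the descriptor; and because $\leqq^{\prime} \subseteq \leqq$, the minimum of $\set{\Belsome{A}}$ is still $K \cro A$, so the retrieval argument goes through as you sketched it. Your ``reflexive-transitive-consistent closure'' gestures in this direction, but without the explicit linearization the structure you build need not satisfy Definition \ref{definition of relational model}, and the completeness direction of the theorem as stated is not proved.
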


\subsection{More properties of choice revision}

In this subsection, we will study additional properties of choice revision from the point of view of postulates. The postulates introduced in the previous subsection do not necessarily cover all the reasonable properties of this operation. In what follows we are going to investigate some additional ones, in particular,  the following:

\begin{enumerate}
\item[] If $A \neq \emptyset$, then$A \cap (K \cro A) \neq \emptyset$. (\emph{\textbf{$\cro$-success}})
\item[] If $A \not \equiv \{\falsum\}$, then $K \cro A \nvdash \falsum$. (\emph{\textbf{$\cro$-consistency}})
\end{enumerate}

To some extent, $\cro$-success is a strengthening of $\cro$-relative success and $\cro$-regularity, but it does not say anything about the limiting case in which the input is empty. To cover this limiting case, we need the following postulate:

\begin{enumerate}
\item[] If $A = \emptyset$, then $K \cro A = K$. (\textbf{$\cro$-vacuity})
\end{enumerate}

The interrelations among $\cro$-success, $\cro$-relative success and \linebreak $\cro$-regularity are summarized as follows.

\begin{OBS}\label{Observation on the inter-derivability among success, relative , vacuity and regularity }
Let $\cro$ be some choice revision on $K$.
\begin{enumerate}
\item If $\cro$ satisfies relative success, then it satisfies vacuity.
\item If $\cro$ satisfies success and vacuity, then it satisfies relative success.
\item If $\cro$ satisfies success, then it satisfies regularity.
\end{enumerate}
\end{OBS}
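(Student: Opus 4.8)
The plan is to prove the three implications separately, since each is a direct unfolding of the definitions of the postulates involved; no construction or auxiliary lemma is needed, only the statements of $\cro$-success, $\cro$-relative success, $\cro$-regularity and $\cro$-vacuity together with two trivial facts about intersections.

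For the first claim I would assume $\cro$ satisfies relative success and take $A = \emptyset$. The point is that $\emptyset \cap (K \cro A) = \emptyset$, so the second disjunct of relative success, namely $A \cap (K \cro A) \neq \emptyset$, cannot hold. Relative success therefore forces its first disjunct, $K \cro A = K$, which is exactly $\cro$-vacuity. For the second claim, assuming both success and vacuity, I would split on whether $A$ is empty. If $A = \emptyset$, vacuity gives $K \cro A = K$, the first disjunct of relative success; if $A \neq \emptyset$, success gives $A \cap (K \cro A) \neq \emptyset$, its second disjunct. In either case relative success holds, so the case split is exhaustive.

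For the third claim I would assume success and suppose the antecedent of regularity, $A \cap (K \cro B) \neq \emptyset$. The key observation is that a nonempty intersection $A \cap (K \cro B)$ requires $A$ itself to be nonempty; success then applies to $A$ and yields $A \cap (K \cro A) \neq \emptyset$, which is precisely the consequent of regularity. Note that $B$ plays no role beyond witnessing that $A$ is nonempty.

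I do not anticipate a genuine obstacle here, as all three arguments reduce to one-line case analyses. The only points worth stating explicitly are the two elementary set-theoretic facts being exploited in opposite directions: that an empty set meets anything emptily (used for the first claim) and, conversely, that a nonempty intersection forces each of its two sets to be nonempty (used for the third).
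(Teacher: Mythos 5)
Your proposal is correct and matches the paper's own proof essentially step for step: item~1 forces the first disjunct of relative success when $A=\emptyset$, item~3 extracts $A\neq\emptyset$ from the nonempty intersection and applies success, and your case split in item~2 is just the contrapositive form of the paper's argument (the paper assumes $A\cap(K\cro A)=\emptyset$ and derives $K\cro A = K$ via success and vacuity). Incidentally, your version of item~1 correctly starts from $A=\emptyset$, whereas the paper's printed proof contains a typo (``Let $A\neq\emptyset$'') at the corresponding point.
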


$\cro$-consistency is a plausible constraint on a rational agent. While accepting $\cro$-success and $\cro$-consistency as ``supplementary'' postulates for choice revision $\cro$, the corresponding relational model on which $\cro$ is based will also need to satisfy some additional properties. We use the following representation theorem to conclude this subsection.

\begin{THE}\label{Representation thoerem for choice revision additionally satisfying success and consistency }
Let $\mathcal{L}$ be a finite language and $\cro$ some revision operation on $K \subseteq \mathcal{L}$. Then, $\cro$ satisfies  $\cro$-closure, $\cro$- success, $\cro$-vacuity, $\cro$-confirmation, $\cro$-reciprocity and $\cro$-consistency iff 
 it is a choice revision determined by some relational model which satisfies the following two condition:
\begin{enumerate}
\item[] $(\mb{X} 3)$ $\conp{\falsum} \in \mb{X}$; 
\item[] $(\leqq 3)$ $\set{\mathfrak{B} \varphi} \neq \emptyset$ and $\mini{\mathfrak{B} \varphi } < \conp{\falsum}$ for every $\varphi \not \vdash \falsum$.
\end{enumerate}
\end{THE}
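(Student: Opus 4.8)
The plan is to reduce everything to Theorem 1 and Observation 6, so that the only genuinely new content is the correspondence between the two supplementary postulates ($\cro$-success, $\cro$-consistency) and the two supplementary model conditions $(\mb{X}3)$ and $(\leqq3)$. I would prove the biconditional in two directions, in each case first importing the ``core'' postulates from Theorem 1 / Observation 1 and then handling only the supplements.

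For the soundness direction I would start from a relational model $\relamod$ satisfying $(\mb{X}3)$ and $(\leqq3)$. Being a relational model, Observation 1 hands me $(\cro 1)$ through $(\cro 5)$ at once, i.e.\ closure, relative success, regularity, confirmation and reciprocity; and relative success yields vacuity by Observation 6(1). The two remaining postulates I derive directly. For $\cro$-success, given nonempty $A = \{\varphi_0, \ldots, \varphi_n\}$ with descriptor $\Phi = \{\mathfrak{B}\varphi_0 \vee \cdots \vee \mathfrak{B}\varphi_n\}$, I use $(\mb{X}3)$: since $\conp{\falsum} = \mathcal{L}$ lies in $\mb{X}$ and satisfies every disjunct, $\set{\Phi} \neq \emptyset$, so $K \cro A = \mini{\Phi} \Vdash \Phi$ and some $\varphi_i \in K \cro A$. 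For $\cro$-consistency, given $A \not\equiv \{\falsum\}$, I pick $\varphi \in A$ with $\varphi \nvdash \falsum$; by $(\leqq3)$, $\mini{\mathfrak{B}\varphi} < \conp{\falsum}$, and since $\set{\mathfrak{B}\varphi} \subseteq \set{\Phi}$ the least-element property gives $\mini{\Phi} \leqq \mini{\mathfrak{B}\varphi} < \conp{\falsum}$, so $K \cro A$ differs from $\conp{\falsum}$ and is consistent.

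For the completeness direction I would first observe that Observation 6(2),(3) turn $\cro$-success plus $\cro$-vacuity into relative success and $\cro$-success into regularity, so the six assumed postulates entail $(\cro 1)$ through $(\cro 5)$, and Theorem 1 delivers a relational model $\relamod$ determining $\cro$. I then verify the two supplements on this same model. For $(\mb{X}3)$, apply $\cro$-success to $A = \{\falsum\}$: then $\falsum \in K \cro \{\falsum\}$, so by closure $K \cro \{\falsum\} = \conp{\falsum} \neq K$; by $\dtoc$ and $\mtod$ this forces $\set{\mathfrak{B}\falsum} \neq \emptyset$ and $\conp{\falsum} = \mini{\mathfrak{B}\falsum} \in \mb{X}$. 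For $(\leqq3)$, fix $\varphi \nvdash \falsum$; $\cro$-success gives $\varphi \in K \cro \{\varphi\}$, so $\set{\mathfrak{B}\varphi} \neq \emptyset$ and $K \cro \{\varphi\} = \mini{\mathfrak{B}\varphi}$, while $\cro$-consistency (as $\{\varphi\} \not\equiv \{\falsum\}$) gives $\mini{\mathfrak{B}\varphi} \neq \conp{\falsum}$; since $\conp{\falsum} \in \mb{X}$ also satisfies $\mathfrak{B}\varphi$, the least-element property yields $\mini{\mathfrak{B}\varphi} \leqq \conp{\falsum}$, whence $\mini{\mathfrak{B}\varphi} < \conp{\falsum}$.

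The step I expect to be most delicate is the passage to the strict inequality in $(\leqq3)$, together with the companion inequality $\mini{\Phi} \leqq \mini{\mathfrak{B}\varphi}$ used in the consistency half of soundness. Both require reading $\mini{\cdot}$ in $(\leqq2)$ as the $\leqq$-\emph{least} element of the relevant satisfaction set, not merely a $\leqq$-minimal one, so that membership of $\conp{\falsum}$ (respectively of $\mini{\mathfrak{B}\varphi}$) in that set forces the desired comparison before inconsistency is ruled out. Under the direct-selection reading of the model this is immediate; if the construction of Theorem 1 only guarantees a minimal element, I would need to revisit that construction to ensure the selected outcome is genuinely least, which is where any real work would be concentrated.
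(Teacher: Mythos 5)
Your proposal is correct and takes essentially the same route as the paper's proof: both directions delegate the core postulates to Theorem 1 and Observations 1 and 6, match $\cro$-success with $(\mb{X}3)$ (via $\conp{\falsum} \in \set{\Belsome{A}}$ for nonempty $A$) and $\cro$-consistency with $(\leqq 3)$, and verify the two supplementary conditions on the very model constructed in Theorem 1, whose $\leqq$ is linear --- which settles your minimal-versus-least worry in the completeness half exactly as you anticipated. Two negligible touch-ups: in the soundness half of $\cro$-consistency treat $A = \emptyset$ separately (then $K \cro A = K$, consistent by the standing assumption), and note that in an arbitrary relational model you can bypass the step $\mini{\Belsome{A}} \leqq \mini{\mathfrak{B}\varphi}$ entirely, since $\mini{\mathfrak{B}\varphi} \in \set{\Belsome{A}}$ together with $\mini{\mathfrak{B}\varphi} < \conp{\falsum}$ already contradicts the $(\leqq 2)$-minimality of $\mini{\Belsome{A}}$ in $\set{\Belsome{A}}$ if $\mini{\Belsome{A}} = \conp{\falsum}$, so mere minimality suffices there.
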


\section{An alternative modelling for choice revision}\label{section on an alternative modelling for choice revision}

In this section, we propose an alternative modelling for choice revision, which is based on so-called \textit{multiple believability relations}. A believability relation $\preceq$ is a binary relation on sentences of $\mathcal{L}$. Intuitively, $\varphi \preceq \psi$ means that the subject is at least as prone to believing $\varphi$ as to believing $\psi$.\footnote{\, For more detailed investigation on believability relations, including its relationship with the epistemic entrenchment relation introduced in \cite{gardenfors_revisions_1988}, see \cite{hansson_relations_2014} and \cite{zhang_believability_2017}.} We can generalize $\preceq$ to a multiple believability relation $\preceq_{\ast}$ which is a binary relation on the set of all finite subsets of $\mathcal{L}$ satisfying:

\begin{enumerate}
\item[] \abtob \,\,\,\,\,\,$\varphi \preceq \psi$ iff $\{\varphi\} \preceq_{\ast}  \{\psi\}$.
\end{enumerate}

\noindent This kind of generalization can be done in different ways, and at least two distinct relations can be obtained, namely \textit{package multiple believability relations}, denoted by $\preceqp$, and \textit{choice multiple believability relations}, denoted by $\preceqc$ (with symmetric part $\simeqc$ and strict part $\precc$). Intuitively, $A \preceqp B$ means that it is easier for the subject to believe all propositions in $A$ than to believe all propositions in $B$ and $A \preceqc B$ means that it is easier for the subject to believe some proposition in A than to believe some proposition in $B$. 

\noindent $\preceqp$ is of little interest since $A \preceqp B$ can be immediately reduced to $\& A \preceq \& B$, given that $A$ and $B$ are finite. In what follows, multiple believability relations (or multi-believability relations for short) only refer to choice multiple believability relations $\preceqc$. ($\{\varphi\} \preceqc A$ will be written as $\varphi \preceqc A$ for simplicity.) 

\subsection{Postulates on multi-believability relations}

Recall the following postulates on believability relations $\preceq$ introduced in \cite{zhang_believability_2017}:
\begin{enumerate}
\item[] \emph{\textbf{$\preceq$-transitivity}}: If $\varphi \preceq \psi$ and $\psi \preceq \lambda$, then $\varphi \preceq \lambda$. 
\item[] \emph{\textbf{$\preceq$-weak coupling}}: If $\varphi \simeq \varphi \wedge \psi $ and $\varphi \simeq \varphi \wedge \lambda$, then $\varphi \simeq \varphi  \wedge (\psi \wedge \lambda)$.
\item[] \emph{\textbf{$\preceq$-coupling}}: If $\varphi \simeq \psi$, then $\varphi \simeq \varphi \wedge \psi$.
\item[] \emph{\textbf{$\preceq$-counter dominance}}: If $\varphi \vdash \psi$, then $\psi \preceq \varphi$.
\item[] \emph{\textbf{$\preceq$-minimality}}: $\varphi \in K$ if and only if $\varphi \preceq \psi$ for all $\psi$.
\item[] \emph{\textbf{$\preceq$-maximality}}: If $\psi \preceq \varphi$ for all $\psi$, then $\varphi \equiv \falsum$.
\item[] \emph{\textbf{$\preceq$-completeness}}: $\varphi \preceq \psi$ or $\psi \preceq \varphi$
\end{enumerate} 

Transitivity is assumed for almost all orderings. In virtue of the intuitive meaning of believability relation, $\varphi \simeq \varphi \wedge \psi$ represents that the agent will accept $\psi$ in the condition of accepting $\varphi$. Thus, the rationale for $\preceq$-weak coupling is that if the agent will consequently add $\psi$ and $\lambda$ to her beliefs when accepting $\varphi$, then she also adds the conjunction of them to her beliefs in this case. This is reasonable if we assume that the beliefs of the agent are closed under the consequence operation. The justification of $\preceq$-counter dominance is that if $\varphi$ logically entails $\psi$, then it will be a smaller change and hence easier for the agent to accept $\psi$ rather than to accept $\varphi$, because then $\psi$ must be added too, if we assume that the beliefs of the agent are represented by a belief set. $\preceq$-coupling is a strengthening of $\preceq$-weak coupling.\footnote{\, It is easy to see that $\preceq$-coupling implies $\preceq$-weak coupling, provided that $\preceq$-transitivity and $\preceq$-counter dominance hold. } It says that if $\varphi$ is equivalent to $\psi$ in believability, then the agent will consequently add $\psi$ to her beliefs in case of accepting $\varphi$ and vice versa. $\preceqc$-minimality is justifiable since nothing needs to be done to add $\varphi$ to $K$ if it is already in $K$. $\preceq$-maximality is justifiable since it is reasonable to assume that it is strictly more difficult for a rational agent to accept $\falsum$ than to accept any non-falsum. $\preceq$-completeness seems a little bit strong. It says that all pairs of sentences are comparable in believability. In accordance with \cite{zhang_believability_2017}, we call relations satisfying all these postulates \textit{quasi-linear believability relations}.

We can generalize these postulates on believability relations in a natural way to postulates multi-believability relations as follows:\footnote{\, In what follows, it is always assumed that all sets $A$ and $B$ and $C$ mentioned in postulates on multi-believability relations are finite sets.}

\begin{enumerate}
\item[] \emph{\textbf{$\preceqc$-transitivity}}: If $A \preceqc B$ and $B \preceqc C$, then $A \preceqc C$.
\item[] \emph{\textbf{$\preceqc$-weak coupling}}: If \( {A \simeqc A \owedge B} \) and \( {A \simeqc A \owedge C} \), then $A \simeqc A \owedge B \owedge C$.
\item[] \emph{\textbf{$\preceqc$-coupling}}: If $A \simeqc B$, then $A \simeqc A \owedge B$. 
\item[] \emph{\textbf{$\preceqc$-counter dominance}}: If for every $\varphi \in B$ there exists $\psi \in A$ such that $\varphi \vdash \psi$, then $A \preceqc B$. 
\item[] \emph{\textbf{$\preceqc$-minimality}}: $A \preceqc B$ for all $B$ if and only if $A \cap K \neq \emptyset$. 
\item[] \emph{\textbf{$\preceqc$-maximality}}: If $B$ is not empty and $A \preceqc B$ for all non-empty $A$, then $B \equiv \{\falsum\}$. 
\item[] \emph{\textbf{$\preceqc$-completeness}}: $A \preceqc B$ or $B \preceqc A$.
\end{enumerate}

\noindent These postulates on multi-believability relations can be understood in a similar way that their correspondents on believability relations are understood. 

Furthermore, we propose the following two additional postulates on multi-believability relations:

\begin{enumerate}
\item[] \emph{\textbf{$\preceqc$-determination}}: $A \precc \emptyset$ for every non-empty $A$.
\item[] \emph{\textbf{$\preceqc$-union}}: $A \preceq A \cup B$ or $B \preceq A \cup B$.
\end{enumerate}

\noindent At least on the surface, these two could not be generalizations of any postulate on believability relation. In some sense the meaning of \linebreak $\preceqc$-determination is correspondent to that of $\cro$-success, since if it is a \linebreak strictly smaller change for the agent to accept some sentences from a non-empty $A$ rather than to take some sentences from the empty set, which is obviously impossible, then it seems to follow that the agent will successfully add some sentences in $A$ to her original beliefs when exposed to the new information represented by $A$, and vice versa. Similarly, there is an obvious correspondence between the forms and meanings of $\preceqc$-union and $\cro$-dichotomy. They both suggest that to partially accept a non-empty $A$ is equivalent to accept some single sentence in $A$. This is plausible if we assume that the agent is extremely cautious to the new information.

\begin{OBS}\label{Observation on the interderivability among postulates on multiple believability relations}
Let $\preceqc$ be some multi-believability relation satisfying $\preceqc$-transitivity and $\preceqc$-counter dominance. If it satisfies $\preceqc$-union in addition, then
\begin{enumerate}
\item It satisfies $\preceqc$-completeness.
\item It satisfies $\preceqc$-weak coupling iff it $\preceqc$-satisfies coupling.
\end{enumerate} 
\end{OBS}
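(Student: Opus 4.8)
The plan is to prove part 1 first and then use it, together with a small lemma extracted from $\preceqc$-union, to handle the two implications in part 2. Throughout I will lean on one elementary tool: if for every element of $X$ there is an element of $Y$ that it entails and, symmetrically, for every element of $Y$ there is an element of $X$ that it entails, then $\preceqc$-counter dominance applied in both directions yields $X \simeqc Y$. This ``mutual-dominance device'' immediately gives, for instance, $A \owedge A \simeqc A$, $(A \owedge B) \owedge (A \owedge C) \simeqc A \owedge B \owedge C$, and $(A \cup B) \owedge A \owedge B \simeqc A \owedge B$, each of which I will need below; all are routine to verify by exhibiting the required entailing elements, typically by repeating a conjunct.

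For part 1, note first that $A \cup B \preceqc A$ and $A \cup B \preceqc B$ hold by $\preceqc$-counter dominance, since every $\varphi \in A$ (resp.\ $\varphi \in B$) lies in $A \cup B$ and entails itself. Now $\preceqc$-union gives $A \preceqc A \cup B$ or $B \preceqc A \cup B$; in the first case $\preceqc$-transitivity with $A \cup B \preceqc B$ yields $A \preceqc B$, and in the second it yields $B \preceqc A$. Hence $\preceqc$-completeness holds. The same two cases, now combined with a hypothesis $A \simeqc B$, also establish the lemma I will reuse: whenever $A \simeqc B$ one has $A \simeqc A \cup B \simeqc B$.

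For part 2, the direction from $\preceqc$-coupling to $\preceqc$-weak coupling needs no $\preceqc$-union and simply transcribes the single-sentence footnote argument into the multiset setting: from $A \simeqc A \owedge B$ and $A \simeqc A \owedge C$ one gets $A \owedge B \simeqc A \owedge C$ by transitivity, then $\preceqc$-coupling applied to this pair gives $A \owedge B \simeqc (A \owedge B) \owedge (A \owedge C)$, and the mutual-dominance device identifies the right-hand side with $A \owedge B \owedge C$; chaining with $A \simeqc A \owedge B$ delivers $A \simeqc A \owedge B \owedge C$.

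The substantial direction is from $\preceqc$-weak coupling to $\preceqc$-coupling, and this is where $\preceqc$-union is essential. Given $A \simeqc B$, the inequality $A \preceqc A \owedge B$ is free from counter dominance, so the whole difficulty is to obtain $A \owedge B \preceqc A$, which is emphatically not a counter-dominance consequence. My plan is to route through $A \cup B$ and apply $\preceqc$-weak coupling to the triple $(A \cup B,\, A,\, B)$. First, the mutual-dominance device gives $(A \cup B) \owedge A \simeqc A$ and $(A \cup B) \owedge B \simeqc B$; the union-lemma $A \simeqc A \cup B \simeqc B$ then upgrades these, by transitivity, to the two weak-coupling hypotheses $A \cup B \simeqc (A \cup B) \owedge A$ and $A \cup B \simeqc (A \cup B) \owedge B$. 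Weak coupling now yields $A \cup B \simeqc (A \cup B) \owedge A \owedge B$, the mutual-dominance device rewrites the right-hand side as $A \owedge B$, and a final appeal to $A \simeqc A \cup B$ with transitivity delivers $A \simeqc A \owedge B$. I expect the main obstacle to be precisely the realization that one must enlarge the problem to $A \cup B$ before weak coupling becomes applicable: weak coupling can only relate $A \cup B$ to its own $\owedge$-products, so the union-lemma, and hence $\preceqc$-union, is exactly what lets the argument transport the resulting equivalence back down to the comparison between $A \owedge B$ and $A$.
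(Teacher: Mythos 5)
Your proof is correct, and on the substantive direction (weak coupling to coupling) it takes a genuinely different route from the paper's. The paper routes through Lemma~\ref{Lemma on the representation element}: using $\preceqc$-union it proves by induction on the size of $A$ that every non-empty $A$ contains a representative $\varphi$ with $\varphi \simeqc A$, then establishes coupling for singletons by applying weak coupling to $A = \{\varphi, \psi\}$ (the two hypotheses $A \simeqc A \owedge \varphi$ and $A \simeqc A \owedge \psi$ being supplied by the lemma), and finally lifts the singleton case to arbitrary $A \simeqc B$ via the representatives and counter dominance. You bypass the representative-element lemma and the induction entirely: your union-lemma (if $A \simeqc B$ then $A \simeqc A \cup B \simeqc B$) is a direct two-case consequence of $\preceqc$-union, and a single application of weak coupling to the triple $(A \cup B, A, B)$ --- with the hypotheses $A \cup B \simeqc (A \cup B) \owedge A$ and $A \cup B \simeqc (A \cup B) \owedge B$ secured by your mutual-dominance device plus the union-lemma --- yields $A \cup B \simeqc (A \cup B) \owedge A \owedge B \simeqc A \owedge B$, whence $A \simeqc A \owedge B$ by transitivity; I checked the counter-dominance verifications (repeating a conjunct, e.g.\ $\alpha \vdash \alpha \wedge \alpha$ and $(\chi \wedge \alpha) \wedge \beta \vdash \alpha \wedge \beta$) and they all go through, including the degenerate cases where $A$ or $B$ is empty, which your argument handles uniformly because the relevant $\owedge$-sets collapse to $\emptyset$ and the comparisons become vacuous, whereas the paper treats these as explicit subcases. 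What the paper's detour buys is a reusable tool: Lemma~\ref{Lemma on the representation element} is invoked repeatedly later (in Observation~\ref{observation for reduction to single sentence} and the representation theorems), and the singleton reduction matches the paper's translation between $\preceqc$ and single-sentence believability relations. What your route buys is a shorter, induction-free argument that isolates exactly where $\preceqc$-union is needed, namely to move between $A$, $B$ and $A \cup B$ so that weak coupling can be applied once at the level of $A \cup B$. Your part 1 and your coupling-to-weak-coupling direction coincide with the paper's proofs.
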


\noindent Observation \ref{Observation on the interderivability among postulates on multiple believability relations} indicates that $\preceqc$-union is strong. It should be noted that for a believability relation, neither $\preceq$-completeness nor $\preceq$-coupling can be derived from $\preceq$-transitivity, $\preceq$-counter dominance and $\preceq$-weak coupling.

In what follows, we name multi-believability relations satisfying all the above postulates \textit{standard multi-believability relations}. 

\subsection{Translations between believability relations and multiple believability relations}
In this subsection, we will show that although it is impossible to find a postulate on believability relations that corresponds to $\preceqc$-determination or $\preceqc$-union, there exists a translation between quasi-linear believability relations and standard multi-believability relations.

\begin{OBS}\label{observation for reduction to single sentence}
Let $\preceqc$ satisfy $\preceqc$-determination, $\preceqc$-transitivity and $\preceqc$-counter dominance. Then, for any non-empty finite sets $A$ and $B$,
\begin{enumerate}
\item $A \preceqc B$ if and only if there exists $\varphi \in A$ such that $\varphi \preceqc B$.
\item $A \preceqc B$ if and only if $A \preceqc \varphi$ for all $\varphi \in B$.
\end{enumerate}
\end{OBS}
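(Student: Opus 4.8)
The plan is to reduce both biconditionals to element-wise comparisons, exploiting that $\preceqc$-counter dominance makes a larger input set $\preceqc$-smaller (intuitively: easier to believe). The fact I would record first is a monotonicity principle: for any non-empty finite $A$ and any $\varphi \in A$ we have $A \preceqc \{\varphi\}$, and more generally $A' \preceqc A$ whenever $A \subseteq A'$ with $A$ non-empty. Both are immediate instances of $\preceqc$-counter dominance, since every element of the smaller set is entailed by itself and itself lies in the larger set.

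With this in hand the two ``monotone'' halves are routine. For the right-to-left direction of (1), suppose $\varphi \in A$ with $\{\varphi\} \preceqc B$; from $A \preceqc \{\varphi\}$ and $\{\varphi\} \preceqc B$, $\preceqc$-transitivity gives $A \preceqc B$. For the left-to-right direction of (2), suppose $A \preceqc B$ and fix $\psi \in B$; since $\psi \in B$ yields $B \preceqc \{\psi\}$ by monotonicity, transitivity through $B$ delivers $A \preceqc \psi$. Thus the backward half of (1) and the forward half of (2) need nothing beyond counter dominance and transitivity.

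The substance lies in the remaining halves: the left-to-right direction of (1) (from $A \preceqc B$ extract a single $\varphi \in A$ with $\{\varphi\} \preceqc B$) and the right-to-left direction of (2) (from $A \preceqc \psi$ for every $\psi \in B$ conclude $A \preceqc B$). I would route both through two dual decomposition facts: (a) if $A_1 \cup A_2 \preceqc B$ then $A_1 \preceqc B$ or $A_2 \preceqc B$; and (b) if $A \preceqc B_1$ and $A \preceqc B_2$ then $A \preceqc B_1 \cup B_2$. Granting (a), direction (1) follows by induction on $|A|$, peeling off one element and applying the induction hypothesis to whichever piece still lies $\preceqc$-below $B$; granting (b), the converse of (2) follows by induction on $|B|$, recombining the singleton comparisons $A \preceqc \psi$ into $A \preceqc B$.

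The hard part will be establishing (a) and (b), which both amount to passing from a comparison that holds per-element to one carried by a single uniform witness — precisely the point at which the intuitive minimum of a finite set must be realised as an actual element. I would attack this by an extremal argument: using finiteness of $A$ to single out a $\preceqc$-least element among the singletons $\{\varphi\}$ with $\varphi \in A$, and using $\preceqc$-determination to discard the degenerate comparisons with $\emptyset$ that could obstruct the selection. The delicate step, and the one I expect to be the real obstacle, is guaranteeing that the chosen least singleton is actually $\preceqc$-comparable with $B$: counter dominance, transitivity and determination do not obviously force enough comparability on their own, so if the extremal argument stalls the natural remedy is to bring in $\preceqc$-completeness, which by Observation~\ref{Observation on the interderivability among postulates on multiple believability relations} is available once $\preceqc$-union is assumed. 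I would therefore isolate exactly which comparabilities the induction consumes and verify that they are supplied by the hypotheses in force.
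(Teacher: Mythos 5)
Your two easy halves are exactly the paper's (counter dominance plus transitivity), and your reduction of the hard halves to the decomposition facts (a) and (b) is a reasonable plan. The genuine gap is that (a) and (b) are never proved, and the extremal argument you sketch for them from determination, transitivity and counter dominance alone \emph{must} fail, because those three postulates do not entail the hard directions at all. Concretely, define $X \preceqc Y$ to hold iff every $\varphi \in Y$ entails some $\psi \in X$. This relation satisfies counter dominance by definition, is transitive (chase entailments through the middle set), and satisfies determination ($A \preceqc \emptyset$ holds vacuously, while $\emptyset \preceqc A$ fails for non-empty $A$). Yet for $A = \{p, q\}$ and $B = \{p \wedge r,\, q \wedge s\}$ with distinct atoms we have $A \preceqc B$, while neither $p \preceqc B$ nor $q \preceqc B$, since $q \wedge s \nvdash p$ and $p \wedge r \nvdash q$. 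So the left-to-right half of item 1 is outright false under the literal hypotheses, and your fact (a) fails in the same model (take $A_1 = \{p\}$, $A_2 = \{q\}$): it is exactly as strong as the missing principle, so no selection of a $\preceqc$-least singleton can be extracted from these axioms. Determination is a red herring here --- it is never what supplies the single witness.

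Your closing hedge, however, identifies the correct repair, and it is in fact what the paper tacitly does: its proof of both hard halves invokes the first item of Lemma~\ref{Lemma on the representation element} (existence of $\varphi \in A$ with $\varphi \simeqc A$), whose proof explicitly requires $\preceqc$-union; the observation's hypothesis list is evidently a slip, since determination is never used in the paper's argument. Once union is in force your plan closes in one line per fact: union gives $A_1 \preceqc A_1 \cup A_2$ or $A_2 \preceqc A_1 \cup A_2$, which chains by transitivity with $A_1 \cup A_2 \preceqc B$ to yield (a); and union gives $B_1 \preceqc B_1 \cup B_2$ or $B_2 \preceqc B_1 \cup B_2$, which chains with $A \preceqc B_1$ and $A \preceqc B_2$ to yield (b) --- no induction on $|A|$ or $|B|$ beyond what is already inside the paper's Lemma~\ref{Lemma on the representation element} is then needed, and your route becomes essentially an unpacked form of the paper's representative-element argument. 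So: record that the stated hypotheses should read $\preceqc$-union (or include it), prove (a) and (b) from union as above, and discard the determination-based extremal step.
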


\noindent This observation suggests that $\preceq$ and $\preceqc$ can be linked through the following two transitions:
\begin{enumerate}
\item[] \mbtob \,\,\,\,\,\,$\varphi \preceq \psi$ iff $\{\varphi\} \preceqc  \{\psi\}$.
\item[] \btomb \,\,\,\,\,\,$A \preceqc B$ iff $B = \emptyset$ or there exists $\varphi \in A$ such that $\varphi \preceq \psi$ for every $\psi \in B$.
\end{enumerate}

\noindent This is confirmed by the following theorem.

\begin{THE}\label{Theorem on the translation between single and multi believability relations}
\begin{enumerate}
\item If $\preceq$ is a quasi-linear believability relation and $\preceqc$ is constructed from $\preceq$ through the way of \btomb, then $\preceqc$ is a standard multi-believability relation and $\preceq$ can be retrieved from $\preceqc$ in the way of \mbtob.
\item If $\preceqc$ is a standard multi-believability relation and $\preceq$ is constructed from $\preceqc$ through \mbtob, then $\preceq$ is a quasi-linear believability relation and $\preceqc$ can be retrieved from $\preceq$ through \btomb.
\end{enumerate}
\end{THE}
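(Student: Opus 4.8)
The plan is to route both directions through a single computational device. For a quasi-linear $\preceq$, completeness and transitivity guarantee that every finite non-empty set $A$ has a $\preceq$-minimum $m(A)$ (unique up to $\simeq$). Reading \btomb off directly then yields, for non-empty finite $A,B$, the clean characterisation $A \preceqc B$ iff $m(A) \preceq m(B)$: the existential witness demanded by \btomb can always be taken to be $m(A)$, and the universal quantifier over $B$ is governed by its hardest instance $m(B)$. This identity, together with the edge behaviour that \btomb forces at $\emptyset$, is the engine for Part~1; for Part~2 the corresponding engine is Observation~\ref{observation for reduction to single sentence}, whose hypotheses (determination, transitivity, counter dominance) hold for any standard $\preceqc$.

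For Part~1 I would first verify, straight from \btomb, the three ``base'' postulates: $\preceqc$-transitivity (chain two witnesses through $\preceq$-transitivity), $\preceqc$-counter dominance (for each $\varphi\in B$ pick $\psi\in A$ with $\varphi\vdash\psi$, so $m(A)\preceq\psi\preceq\varphi$ by $\preceq$-counter dominance, giving $m(A)\preceq m(B)$), and $\preceqc$-determination ($A\preceqc\emptyset$ always, $\emptyset\not\preceqc A$ for non-empty $A$, both immediate). The remaining postulates then drop out of the minimum characterisation: $\preceqc$-completeness and $\preceqc$-union from $\preceq$-completeness applied to $m(A),m(B)$ (resp. to the minimum of $A\cup B$, which lies in $A$ or in $B$); $\preceqc$-minimality and $\preceqc$-maximality by instantiating the quantified sets at singletons and invoking $\preceq$-minimality/$\preceq$-maximality, where for minimality the forward direction uses the device of testing $A$ against $B=\{\psi_\varphi:\varphi\in A\}$, with $\psi_\varphi$ witnessing the failure of $\varphi$ to be $\preceq$-least. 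The retrieval \mbtob is immediate, since \btomb on singletons collapses to $\varphi\preceq\psi$.

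For Part~2 the seven quasi-linearity postulates for $\preceq$ are obtained by instantiating each $\preceqc$-postulate at singletons: counter dominance, transitivity, completeness, coupling and weak coupling transfer verbatim (using $\{\varphi\}\owedge\{\psi\}=\{\varphi\wedge\psi\}$), while $\preceq$-minimality and $\preceq$-maximality need the reduction formulas of Observation~\ref{observation for reduction to single sentence} to convert ``$\{\varphi\}\preceqc B$ for all $B$'' into ``$\varphi\preceq\psi$ for all $\psi$'' and ``$A\preceqc\{\varphi\}$ for all non-empty $A$'' into ``$\psi\preceq\varphi$ for all $\psi$''. The retrieval \btomb follows by combining both clauses of Observation~\ref{observation for reduction to single sentence}, which give $A\preceqc B$ iff $\exists\varphi\in A\,\forall\psi\in B\,(\varphi\preceq\psi)$ for non-empty $A,B$; the empty cases are pinned down by $\preceqc$-counter dominance ($A\preceqc\emptyset$, matching the ``$B=\emptyset$'' clause) and $\preceqc$-determination ($\emptyset\not\preceqc B$ for non-empty $B$), so the reconstructed relation agrees with the original everywhere.

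I expect two places to demand the most care. The first is $\preceqc$-coupling in Part~1: here I must show $m(A)\wedge m(B)$ is (equivalent to) a $\preceq$-minimum of $A\owedge B$ — one direction giving $A\preceqc A\owedge B$ since $m(A)\preceq\varphi\preceq\varphi\wedge\psi$ by counter dominance, the other using $\preceq$-coupling ($m(A)\simeq m(A)\wedge m(B)$ from $m(A)\simeq m(B)$) to obtain $A\owedge B\preceqc A$ — after which $\preceqc$-weak coupling follows from $\preceqc$-coupling via Observation~\ref{Observation on the interderivability among postulates on multiple believability relations}. The second is the bookkeeping at $\emptyset$ running through both retrievals, where the asymmetry encoded in $\preceqc$-determination (every non-empty set strictly below $\emptyset$) is precisely what makes \btomb reversible; overlooking it would break the $A=\emptyset$ and $B=\emptyset$ cases.
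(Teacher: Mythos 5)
Your proposal is correct and takes essentially the same route as the paper's proof: both parts reduce comparisons of finite sets to single representative sentences (your $m(A)$ plays exactly the role of the $\preceq$-least elements the paper extracts via $\preceq$-completeness and, for $\preceqc$-coupling, via Lemma \ref{Lemma on the representation element}, while your Part 2 runs through Observation \ref{observation for reduction to single sentence} with the same empty-set bookkeeping via $\preceqc$-determination and $\preceqc$-counter dominance). The only deviations are cosmetic: you verify the forward direction of $\preceqc$-minimality with the test set $\{\psi_\varphi \mid \varphi \in A\}$ where the paper simply tests against $\{\taut\}$, and you make explicit the appeal to Observation \ref{Observation on the interderivability among postulates on multiple believability relations} for $\preceqc$-completeness and $\preceqc$-weak coupling that the paper leaves implicit.
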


\subsection{Choice revision constructed from multi-believability relations}

Now we turn to the construction of choice revision through \linebreak multi-believability relations. Recall that a sentential revision $\ast$ can be constructed from a believability relation $\preceq$ in this way \cite{zhang_believability_2017}:
\begin{eqnarray}\nonumber
\rtoo \,\,\,\,\,\,\,K \ast \varphi= \{ \psi \mid \varphi \simeq \varphi \wedge \psi \} 
\end{eqnarray} 

\noindent As we have explained, $\varphi \simeq \varphi \wedge \psi$ could be understood as that the agent will consequently accept $\psi$ in case of accepting $\varphi$. So, the set $\{ \psi \mid \varphi \simeq \varphi \wedge \psi \} $ is just the agent's new set of beliefs after she performed belief revision with input $\varphi$. Thus, we can similarly construct choice revision from multi-believability relations in the following way:

\begin{DEF}\label{definition of choice revision based on multi-believability relations}
Let $\preceqc$ be some multi-believability relation. A choice revision $\cro$ on $K$ is based on (or determined by) $\preceqc$ if and only if: for any finite $A$, 
\begin{eqnarray}\nonumber
\mbtoc   \,\,\,\,\,\,\, K \cro A= 
\begin{cases}
\{\varphi \mid A \simeqc A \owedge \varphi \} & \mbox{If $A \precc \emptyset$},\\
K & \mbox{otherwise}.
\end{cases}
\end{eqnarray}
\end{DEF}

The primary results of this section are the following two representation theorems. Comparing with Theorems \ref{Representation theorem for choice revision derived from descriptor revision of finite language} and \ref{Representation thoerem for choice revision additionally satisfying success and consistency }, these two theorems are applicable to more general cases since they do not assume that the language $\mathcal{L}$ is finite. These two theorems demonstrate that multi-believability relations provide a fair modelling for choice revision characterized by the set of postulates mentioned in Section \ref{section choice revison based on descriptor revision}.

\begin{THE}\label{Representation theorem for choice revision based on weak multiple believability relation}
Let $\cro$ be some choice revision on $K$. Then, $\cro$ satisfies $(\cro 1)$ through $(\cro 5)$ iff it is determined by some multi-believability relation $\preceqc$ satisfying $\preceqc$-transitivity, $\preceqc$-weak coupling, $\preceqc$-counter-dominance, $\preceqc$-minimality \linebreak and $\preceqc$-union.
\end{THE}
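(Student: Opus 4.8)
The plan is to prove both directions. For \emph{soundness}, assume $\preceqc$ satisfies the five postulates and that $\cro$ is obtained via \mbtoc, and verify $(\cro 1)$ through $(\cro 5)$ in turn. Two derived facts do most of the work. First, by Observation~\ref{Observation on the interderivability among postulates on multiple believability relations} the relation also satisfies $\preceqc$-coupling, and $\preceqc$-union with $\preceqc$-transitivity and $\preceqc$-counter dominance yields, for every non-empty finite $A$, some $\varphi_0 \in A$ with $\{\varphi_0\} \simeqc A$ (split off one element and induct). With $\preceqc$-coupling this gives $A \simeqc A \owedge \varphi_0$, i.e. $\varphi_0 \in K \cro A$, which is $\cro$-relative success in the non-trivial case $A \precc \emptyset$; when $\neg(A \precc \emptyset)$, \mbtoc returns $K$ directly. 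Closure $(\cro 1)$ holds because $\preceqc$-counter dominance and transitivity make $\{\varphi \mid A \owedge \varphi \preceqc A\}$ closed under consequence while $\preceqc$-weak coupling makes it closed under conjunction; $(\cro 4)$ follows since $\preceqc$-minimality forces $A \cap K \neq \emptyset$ to make $A$ $\preceqc$-least, whence $K \cro A = K$; regularity and reciprocity then follow from this description together with the reduction of each input to its $\simeqc$-least element.

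For \emph{completeness} I would define $\preceqc$ from $\cro$ by the transition \ctomb,
\[ A \preceqc B \iff B \cap (K \cro B) = \emptyset \ \text{ or }\ A \cap (K \cro (A \cup B)) \neq \emptyset, \]
and show it satisfies the five postulates and determines $\cro$ through \mbtoc. The essential design choice is the first disjunct: a set $B$ is $\simeqc$-equivalent to $\emptyset$ exactly when it is \emph{rejected}, i.e. $B \cap (K \cro B) = \emptyset$. This reconciles the two ways \mbtoc can return $K$: either $A$ is rejected (so $\emptyset \preceqc A$ and the second branch fires) or $A \cap K \neq \emptyset$ (so $A$ is $\preceqc$-minimal by $\preceqc$-minimality and the first branch already evaluates to $K$). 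A naive definition forcing $\emptyset \preceqc A \iff K \cro A = K$ is inconsistent with $\preceqc$-minimality, as the operation with $K \cro A = K$ for all $A$ shows, which is why the rejection clause is needed.

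The verification then proceeds in steps. Branch correctness: $\emptyset \preceqc A \iff A \cap (K \cro A) = \emptyset$, hence $A \precc \emptyset \iff K \cro A \neq K$ by $\cro$-relative success. Recovery: for $A$ with $A \cap (K \cro A) \neq \emptyset$ one shows $\varphi \in K \cro A \iff A \simeqc A \owedge \varphi$, the key sublemma being $K \cro (A \cup (A \owedge \varphi)) = K \cro A$, proved from $\cro$-regularity and $\cro$-cautiousness (Observation~\ref{Observation that reciprocity is equivalent to cautiousness}) after noting that any element of $A \owedge \varphi$ lying in the joint outcome drags a member of $A$ into it by $\cro$-closure. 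Then $\preceqc$-counter dominance, $\preceqc$-minimality and $\preceqc$-union follow fairly directly from $\cro$-confirmation, $\cro$-relative success and $\cro$-regularity; union in particular only needs that some member of $A \cup B$ survives in $K \cro (A \cup B)$.

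The main obstacle is $\preceqc$-transitivity (and, analogously, $\preceqc$-weak coupling). For rejected right-hand sides transitivity is immediate, so the work is the all-accepted case, where one first establishes the clean characterization that for accepted $A,B$ one has $A \preceqc B \iff K \cro (A \cup B) = K \cro A$. Given $A \preceqc B$ and $B \preceqc C$ I would analyse $K \cro (A \cup B \cup C)$ by $\cro$-dichotomy (Observation~\ref{Observation that dichotomy can be derived from reciprocity}), which puts it among $K \cro A$, $K \cro B$, $K \cro C$, and then apply $\cro$-cautiousness to $A \cup B \subseteq A \cup B \cup C$ together with the fact that an accepted set always meets its own outcome: whenever the analysis threatens a three-cycle, cautiousness forces two of the outcomes to coincide (e.g. $K \cro A = K \cro B$), collapsing the cycle and giving $K \cro (A \cup C) = K \cro A$, i.e. $A \preceqc C$. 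Weak coupling is handled the same way, reducing $A \simeqc A \owedge B$ to membership in $K \cro A$ and using that $K \cro A$ is closed under conjunction by $(\cro 1)$. I expect assembling these case distinctions cleanly, rather than any single inference, to be the crux.
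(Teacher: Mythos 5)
Your proposal is correct, but its completeness half takes a genuinely different route from the paper's. The soundness direction is essentially the paper's own argument: the representation element you extract from $\preceqc$-union by splitting off one element and inducting is exactly the first item of Lemma~\ref{Lemma on the representation element}, and the appeal to Observation~\ref{Observation on the interderivability among postulates on multiple believability relations} for $\preceqc$-coupling matches the paper. For completeness, however, the paper's transition \ctomb{ }defines $A \preceqc B$ via the existence of a finite \emph{chain} $A_0, \cdots, A_n$ with $K \cro A = K \cro A_0$, $K \cro B = K \cro A_n$ and $A_i \cap (K \cro A_{i+1}) \neq \emptyset$ throughout; this makes $\preceqc$-transitivity trivial (concatenate chains) but shifts the burden onto the proposition that $A \simeqc B$ implies $K \cro A = K \cro B$, whose proof requires $\cro$-strong reciprocity (Observation~\ref{Observation that reciprocity is equivalent to strong reciprocity}). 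Your one-step definition, $A \preceqc B$ iff $B \cap (K \cro B) = \emptyset$ or $A \cap (K \cro (A \cup B)) \neq \emptyset$, dispenses with strong reciprocity altogether: recovery and the easy postulates become local computations using $\cro$-cautiousness (Observation~\ref{Observation that reciprocity is equivalent to cautiousness}) and your ``drag'' argument via $\cro$-closure (any $\varphi \wedge \psi$ with $\varphi \in A$ in the joint outcome puts $\varphi$ there too), while transitivity and weak coupling become the hard cases. Your sketch for transitivity does go through: whichever of $A$, $B$, $C$ meets $K \cro (A \cup B \cup C)$, cautiousness (together with the hypotheses $K \cro (A \cup B) = K \cro A$ and $K \cro (B \cup C) = K \cro B$ on accepted sets) collapses the identifications down to $K \cro (A \cup B \cup C) = K \cro A$, whence $A \cap (K \cro (A \cup C)) \neq \emptyset$ by one more application of cautiousness; weak coupling similarly reduces to closure of $K \cro A$ under conjunction. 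The trade is roughly even in total effort, and the two definitions in fact agree on accepted pairs (from $A \cap (K \cro (A \cup B)) \neq \emptyset$ one gets the paper's chain $A$, $A \cup B$, $B$), but your version has the merit of needing only Observations~\ref{Observation that reciprocity is equivalent to cautiousness} and~\ref{Observation that dichotomy can be derived from reciprocity} rather than the loop-style Observation~\ref{Observation that reciprocity is equivalent to strong reciprocity}.

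One slip to repair: your ``branch correctness'' claim $A \precc \emptyset$ iff $K \cro A \neq K$ is false right to left --- when $A \cap K \neq \emptyset$, $\cro$-confirmation gives $K \cro A = K$ while $A \cap (K \cro A) \neq \emptyset$, so $A \precc \emptyset$ still holds. The correct characterization, which you state first and which your recovery lemma is keyed to, is $A \precc \emptyset$ iff $A \cap (K \cro A) \neq \emptyset$; with that reading the branches of \mbtoc{ }match $\cro$ in all cases, since in the confirmation case recovery yields $\{\varphi \mid A \simeqc A \owedge \varphi\} = K = K \cro A$, and for rejected $A$ relative success yields $K \cro A = K$. So the error is in the gloss, not in the construction.
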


\begin{THE}\label{Representation theorem for choice revision based on standard multiple believability relation}
Let $\cro$ be some choice revision on $K$. Then, $\cro$ satisfies  $\cro$-closure, $\cro$- success, $\cro$-vacuity, $\cro$-confirmation, $\cro$-reciprocity and $\cro$-consistency iff it is determined by some standard multi-believability relation.
\end{THE}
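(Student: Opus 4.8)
The plan is to bootstrap off Theorem~\ref{Representation theorem for choice revision based on weak multiple believability relation} and first reconcile the two postulate sets, so that both directions reduce to the genuinely new content. On the relational side, a standard multi-believability relation satisfies $\preceqc$-transitivity, $\preceqc$-weak coupling, $\preceqc$-counter dominance, $\preceqc$-minimality and $\preceqc$-union --- exactly the hypotheses of Theorem~\ref{Representation theorem for choice revision based on weak multiple believability relation} --- together with two extra postulates, $\preceqc$-determination and $\preceqc$-maximality (recall that $\preceqc$-completeness and $\preceqc$-coupling come for free by Observation~\ref{Observation on the interderivability among postulates on multiple believability relations}). On the operation side, Observation~\ref{Observation on the inter-derivability among success, relative , vacuity and regularity } shows that $\{\cro$-closure, success, vacuity, confirmation, reciprocity, consistency$\}$ is interderivable with $\{(\cro 1)$--$(\cro 5),\cro$-success$,\cro$-consistency$\}$: relative success yields vacuity, while success together with vacuity recovers relative success and success alone yields regularity. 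Hence both directions reduce to handling the two extra postulates --- success/determination and consistency/maximality --- on top of what Theorem~\ref{Representation theorem for choice revision based on weak multiple believability relation} already delivers.

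For the soundness direction, suppose $\cro$ is determined by a standard relation $\preceqc$. Theorem~\ref{Representation theorem for choice revision based on weak multiple believability relation} gives $(\cro 1)$--$(\cro 5)$ at once, hence $\cro$-closure, $\cro$-confirmation and $\cro$-reciprocity outright, and $\cro$-vacuity via Observation~\ref{Observation on the inter-derivability among success, relative , vacuity and regularity }. For $\cro$-success, take non-empty $A$: reflexivity of $\preceqc$ (a consequence of $\preceqc$-counter dominance) together with Observation~\ref{observation for reduction to single sentence} produces some $\varphi\in A$ with $\{\varphi\}\preceqc A$, while counter dominance gives the converse, so $\{\varphi\}\simeqc A$; then $\preceqc$-coupling followed by $\preceqc$-transitivity upgrades this to $A\simeqc A\owedge\varphi$, whence $\varphi\in K\cro A$ by $\preceqc$-determination and Definition~\ref{definition of choice revision based on multi-believability relations}. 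For $\cro$-consistency, if $A$ is non-empty and $K\cro A\vdash\falsum$, then $\falsum\in K\cro A$, i.e. $A\simeqc A\owedge\falsum\equiv\{\falsum\}$; since $A'\preceqc\{\falsum\}$ for every non-empty $A'$ by counter dominance, transitivity yields $A'\preceqc A$ for all non-empty $A'$, and $\preceqc$-maximality forces $A\equiv\{\falsum\}$, which is the contrapositive of what is required.

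For the completeness direction, assume $\cro$ satisfies the six postulates. Since they entail $(\cro 1)$--$(\cro 5)$, Theorem~\ref{Representation theorem for choice revision based on weak multiple believability relation} furnishes a relation $\preceqc$ that determines $\cro$ and satisfies the five basic postulates (hence $\preceqc$-completeness and $\preceqc$-coupling by Observation~\ref{Observation on the interderivability among postulates on multiple believability relations}); it remains only to verify $\preceqc$-determination and $\preceqc$-maximality for this canonical relation. I would read $\preceqc$-determination off $\cro$-success: success guarantees that every non-empty input triggers the genuine first clause of $\mbtoc$ rather than the default, which is precisely $A\precc\emptyset$. With determination in hand, Observation~\ref{observation for reduction to single sentence} becomes applicable and reduces $\preceqc$-maximality to the singleton statement ``$\falsum\preceq\psi$ implies $\psi\vdash\falsum$'': taking $\{\falsum\}$ as witness and using counter dominance, a non-empty $B$ with $A\preceqc B$ for all non-empty $A$ must have every element maximally hard. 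Finally, the defining equation $\mbtoc$ (instantiated at a singleton input) gives $\falsum\preceq\psi$ iff $\falsum\in K\cro\{\psi\}$, so $\cro$-consistency, i.e. $\{\psi\}\not\equiv\{\falsum\}\Rightarrow K\cro\{\psi\}\nvdash\falsum$, yields exactly this singleton fact.

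I expect the main obstacle to be this completeness direction, specifically establishing $\preceqc$-maximality without the finiteness assumption used in Theorem~\ref{Representation thoerem for choice revision additionally satisfying success and consistency }. The care needed is twofold. First, the argument must be staged so that $\preceqc$-determination is proved first and independently, because Observation~\ref{observation for reduction to single sentence} --- the tool that reduces set-level comparisons to singletons --- presupposes determination, so using it to prove determination would be circular. Second, the boundary comparisons against $\falsum$ and $\emptyset$ must be routed through $\mbtoc$ correctly; in particular one must recognise that $\falsum\preceq\psi$ is captured by $\falsum\in K\cro\{\psi\}$ rather than by any comparison involving $K\cro\{\falsum,\psi\}$, the latter being exactly the tempting but unproductive route. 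Confirming that the canonical relation supplied by Theorem~\ref{Representation theorem for choice revision based on weak multiple believability relation} genuinely inherits both extra postulates, uniformly over a possibly infinite language, is the crux of the whole proof.
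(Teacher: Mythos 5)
Your overall architecture coincides with the paper's: both directions are bootstrapped off Theorem \ref{Representation theorem for choice revision based on weak multiple believability relation}, with Observation \ref{Observation on the inter-derivability among success, relative , vacuity and regularity } reconciling the two postulate sets, soundness reduced to checking $\cro$-success and $\cro$-consistency against the relation, and completeness reduced to verifying $\preceqc$-determination and $\preceqc$-maximality for the canonical relation given by \ctomb. Your treatment of maximality does diverge from the paper's in a legitimate way: the paper argues via $\cro$-strong reciprocity that $K \cro B = K \cro \{\falsum\} = \mathcal{L}$ and then applies $\cro$-consistency, whereas you reduce $\{\falsum\} \preceqc B$ to the singleton comparisons $\falsum \preceqc \psi$ via Observation \ref{observation for reduction to single sentence} and read $\falsum \preceqc \psi$ off the retrieval equation \mbtoc{ }as $\falsum \in K \cro \{\psi\}$; this chain is sound (the equivalence of $\psi \simeqc \psi \wedge \falsum$ with $\falsum \preceqc \psi$ follows from counter dominance and transitivity), avoids finiteness just as the paper does, and your ordering discipline --- determination first, since Observation \ref{observation for reduction to single sentence} presupposes it --- is exactly right. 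Two minor omissions: the $A = \emptyset$ case of $\cro$-consistency (handled by vacuity and the standing consistency of $K$, which the paper treats explicitly), and an implicit reliance on the retrieval $\cro = \cro^{\prime}$ from Theorem \ref{Representation theorem for choice revision based on weak multiple believability relation}, which is fine since its hypotheses are met.

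The one genuine flaw is your justification of $\preceqc$-determination. You claim that ``success guarantees that every non-empty input triggers the genuine first clause of \mbtoc{ }rather than the default, which is precisely $A \precc \emptyset$.'' This inference is invalid: when $A \cap K \neq \emptyset$ (say $A = \{\taut\}$), the default clause yields $K \cro A = K$ and $\cro$-success is nonetheless satisfied, so the determination equation together with success does not discriminate which clause fired and cannot by itself deliver $A \precc \emptyset$. The statement is true for the canonical relation, but its proof must go through the definition \ctomb, as the paper does: $A \preceqc \emptyset$ holds by $\preceqc$-counter dominance, while $\emptyset \preceqc A$ would require either $A \cap (K \cro A) = \emptyset$, excluded by $\cro$-success for non-empty $A$, or a chain starting from $\emptyset \cap (K \cro \emptyset) \neq \emptyset$, which is impossible; hence $\emptyset \not\preceqc A$ and $A \precc \emptyset$. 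Since your maximality argument explicitly presupposes determination (both to apply Observation \ref{observation for reduction to single sentence} and to invoke the first clause of \mbtoc{ }at singletons), this step must be repaired before the rest of your completeness direction stands; with that short fix the proposal is complete.
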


Considering the translation between multi-believability relations and believability relations (Theorem \ref{Theorem on the translation between single and multi believability relations}), it seems that these results can be easily transferred to the context of believability relations. However, if we drop some postulates on multi-believability relation such as $\preceqc$-determination, the translation between multi-believability relation and believability relation will not be so transparent, at least it will not be so straightforward as \btomb{ }and \mbtob. As a consequence, the result in Theorem \ref{Representation theorem for choice revision based on weak multiple believability relation} may not be possible to transfer to believability relations in a straightforward way. Moreover, comparing with postulates on believability relations, postulates on multi-believability relations such as $\preceqc$-determination and $\preceqc$-union can present our intuitions on choice revision in a more direct way. Thus, the multi-believability relation is still worth to be studied in its own right.

\section{Conclusion and future work}\label{section conclusion}

As a generalization of traditional belief revision, choice revision has more realistic characteristics. The new information is represented by a set of sentences and the agent could partially accept these sentences as well as reject the others. From the point of technical view, choice revision is interesting since the standard ``select-and-intersect'' methodology in modellings for belief change is not suitable for it. But instead, it can be modelled by a newly developed framework of descriptor revision, which employs a ``select-direct'' approach. After reviewing the construction of choice revision in the framework of descriptor revision, under the assumption that the language is finite, we provided two sets of postulates as the axiomatic characterizations for two variants of choice revision based on such constructions  (in Theorem \ref{Representation theorem for choice revision derived from descriptor revision of finite language} and \ref{Representation thoerem for choice revision additionally satisfying success and consistency }). These postulates, in particular, \linebreak $\cro$-cautiousness and $\cro$-dichotomy, point out that choice revision modelled by descriptor revision has the special characteristic that the agent who performs this sort of belief change is cautious in the sense that she only accepts the new information to the smallest possible extent.

For AGM revision and contraction, there are various independently motivated modellings which are equivalent in terms of expressive power. In this contribution, we also propose an alternative modelling for choice revision. We showed that multi-believability relations can also construct the choice revision axiomatically characterized by the sets of postulates proposed for choice revision based on descriptor revision (Theorem \ref{Representation theorem for choice revision based on weak multiple believability relation} and \ref{Representation theorem for choice revision based on standard multiple believability relation}). Moreover, these results are obtainable without assuming that the language is finite. This may indicate that multi-believability relations are an even more suitable modelling for choice revision.

The study in this contribution can be developed in at least three directions. First, the cautiousness constraint on choice revision, reflected by $\cro$-cautiousness, certainly could be loosened. We think it is an interesting topic for future work to investigate the modeling and axiomatic characterization of more ``reckless'' variants of choice revision. Secondly, as it was showed in \cite{zhang_believability_2017} that AGM revision could be reconstructed from believability relations satisfying certain conditions, it is interesting to ask which conditions a multi-believability relation should satisfy so that its generated choice revision coincides with an  AGM revision when the inputs are limited to singletons. Finally, it is technically interesting to investigate choice revisions with an infinite input set, though they are epistemologically unrealistic.

\section*{Appendix: Proofs}\label{appendix}

\begin{LEM}\label{Lemma on the representation element}
Let $\preceqc$ be some multiple believability relation which satisfies \linebreak $\preceqc$-counter~dominance and $\preceqc$-transitivity. Then, 
\begin{enumerate}
\item If $\preceqc$ satisfies $\preceqc$-union, then for every non-empty $A$, there exists some $\varphi \in A$ such that $\varphi \simeqc A$.
\item For every $\varphi \in A$, $A \simeqc A \owedge \varphi$ if and only if $\varphi \simeqc A$.
\end{enumerate}
\end{LEM}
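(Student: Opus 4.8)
The plan is to first record the easy consequences of $\preceqc$-counter dominance, then dispatch the second claim, and finally prove the first by induction on cardinality.

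First I would extract three facts from $\preceqc$-counter dominance alone. (i) Reflexivity: $A \preceqc A$ for every $A$, since each $\chi \in A$ witnesses itself via $\chi \vdash \chi$. (ii) Monotonicity at the easy end: whenever $A' \subseteq A$ we have $A \preceqc A'$ (each $\chi \in A'$ lies in $A$ and entails itself); in particular $A \preceqc \{\varphi\}$ for every $\varphi \in A$. (iii) The pivotal equivalence $A \owedge \varphi \simeqc \{\varphi\}$ for $\varphi \in A$, which I would prove as follows. For $\{\varphi\} \preceqc A \owedge \varphi$, every element $\psi \wedge \varphi$ of $A \owedge \varphi$ satisfies $\psi \wedge \varphi \vdash \varphi$, so $\preceqc$-counter dominance applies. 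For the reverse $A \owedge \varphi \preceqc \{\varphi\}$, the crucial point — and the step easiest to miss — is that since $\varphi \in A$ the conjunction $\varphi \wedge \varphi$ itself belongs to $A \owedge \varphi$, and $\varphi \vdash \varphi \wedge \varphi$; hence $\preceqc$-counter dominance applies once more. This gives $A \owedge \varphi \simeqc \{\varphi\}$.

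Given (iii), the second claim is immediate by $\preceqc$-transitivity (and hence transitivity of its symmetric part $\simeqc$): if $A \simeqc A \owedge \varphi$, chaining with (iii) yields $A \simeqc \{\varphi\}$, i.e. $\varphi \simeqc A$; conversely $\varphi \simeqc A$ chained with (iii) gives $A \simeqc A \owedge \varphi$. For the first claim I would induct on $|A|$. The base case $A = \{\varphi\}$ is reflexivity. For the step, write $A = A' \cup \{\varphi_n\}$ with $A'$ smaller and apply $\preceqc$-union to $A'$ and $\{\varphi_n\}$, obtaining $A' \preceqc A$ or $\{\varphi_n\} \preceqc A$. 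In the second case (ii) gives $A \preceqc \{\varphi_n\}$, so $\varphi_n \simeqc A$ and we are done. In the first case (ii) gives $A \preceqc A'$, so $A' \simeqc A$; the induction hypothesis supplies $\varphi_i \in A'$ with $\varphi_i \simeqc A'$, and transitivity yields $\varphi_i \simeqc A$ with $\varphi_i \in A' \subseteq A$.

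The main obstacle I anticipate is not the induction itself but isolating the equivalence (iii) and, within it, recognizing that $\varphi \wedge \varphi \in A \owedge \varphi$ whenever $\varphi \in A$; without that observation the direction $A \owedge \varphi \preceqc \{\varphi\}$ appears to fail and the whole reduction of sets to singletons breaks down. Once (iii) is established, both parts reduce to short chases through $\preceqc$-transitivity and the two-case split furnished by $\preceqc$-union.
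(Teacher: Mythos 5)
Your proof is correct and follows essentially the same route as the paper's: part 1 by induction on $|A|$, splitting via $\preceqc$-union into $A \simeqc \{\varphi_n\}$ or $A \simeqc A'$ and invoking the induction hypothesis, and part 2 by chaining the same counter-dominance instances (including $A \owedge \varphi \preceqc \{\varphi\}$, which the paper also tacitly obtains from $\varphi \vdash \varphi \wedge \varphi$ with $\varphi \wedge \varphi \in A \owedge \varphi$) through $\preceqc$-transitivity. Your only departure is organizational: you isolate the equivalence $A \owedge \varphi \simeqc \{\varphi\}$ as a standalone fact, whereas the paper proves the two directions of part 2 inline.
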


\begin{proof}[Proof for Lemma \ref{Lemma on the representation element}:]
\textit{1.} We prove this by mathematical induction on the size $n$ ($n \geq 1$) of $A$. Let $n = 1$, then it follows immediately. Suppose hypothetically that it holds for $n = k$ ($k \geq 1$). Let $n = k+1$. Since $k \geq 1$, there exists a non-empty set $B$ containing $k$ elements and a sentence $\varphi$ such that $A = B \cup \{\varphi\}$. By $\preceqc$-counter dominance and $\preceqc$-union, (i) $A \simeqc \{\varphi\}$ or (ii) $A \simeqc B$. The case of (i) is trivial. In the case of (ii), by the hypothetical supposition, there exists some $\psi \in B \subseteq A$ such that $A \simeqc B \simeqc \psi$. So, by $\preceqc$-transitivity, $A \simeqc \varphi$. To sum up (i) and (ii), there always exists some $\varphi \in A$ such that $\varphi \simeqc A$.\\
\textit{2. From left to right:} Let $\varphi \in A $ and $A \owedge \varphi \simeqc A$. By $\preceqc$-counter dominance, $A \preceqc \varphi$ and $ \varphi \preceqc A \owedge \varphi$. And it follows from $\varphi \preceqc A \owedge \varphi$ and $A \owedge \varphi \simeqc A$ that $\varphi \preceqc A$ by $\preceqc$-transitivity. Thus, $\varphi \simeqc A$. \textit{From right to left:} Let $\varphi \in A$ and $\varphi \simeq A$. By $\preceqc$-counter-dominance, $A \owedge \varphi \preceqc \varphi$. So $A \owedge \varphi \preceqc A$ by $\preceqc$-transitivity. Moreover, $A \preceqc A \owedge \varphi$ by $\preceqc$-counter-dominance. Thus, $A \owedge \varphi \simeqc A$.
\end{proof}

\begin{proof}[Proof for Observation \ref{Observation on the postulates satisfied by the choice revision constructed from relational model}:]
It is easy to see that $\cro$ satisfies $\cro$-closure and $\cro$-relative success. We only check the remaining three postulates. We let $\Belsome{A}$ denote the descriptor $ \{ \mathfrak{B} \varphi_0 \vee \cdots \vee \mathfrak{B} \varphi_n\}$ when $A = \{ \varphi_0 , \cdots , \varphi_n\} \neq \emptyset$.\\
\textit{$\cro$-regularity:} Let $(K \cro B) \cap A \neq \emptyset$. It follows that $A \neq \emptyset$ and $\set{\Belsome{A}} \neq \emptyset$. So $K \cro A = \mini{\Belsome{A}}$ by the definition of $\cro$. Thus, $(K \cro A) \cap A \neq \emptyset$.\\
\textit{$\cro$-confirmation:} Let $A \cap K \neq \emptyset$. Then $A \neq \emptyset $ and $K \in \set{\Belsome{A}}$. It follows from $(\leqq 1)$ and $(\leqq 2)$ that $K$ is the unique $\leqq$-minimal element in $\set{\Belsome{A}}$. Thus, $K \cro A = \mini{\Belsome{A}} = K$.\\ 
\textit{Reciprocity:} Let $(K \cro A_0 ) \cap A_1 \neq \emptyset$ and $(K \cro A_{1} ) \cap A_{0} \neq \emptyset$. Let $i \in \{0,1\}$. It follows that $A_i \neq \emptyset $ and $\set{\Belsome(A_i)} \neq \emptyset$ and hence $K \cro A_i = \mini{\Belsome(A_i)}$ by the definition of $\cro$. So it follows from $\mini{\Belsome{A_0}} \cap A_1 \neq \emptyset$ and $\mini{\Belsome{A_1}} \cap A_{0} \neq \emptyset$ that $\mini{\Belsome{A_0}} \in \set{\Belsome{A_1}}$ and $\mini{\Belsome{A_1}} \in \set{\Belsome{A_0}}$ and hence $\mini{\Belsome{A_0}} \leqq \mini{\Belsome{A_1}} \leqq  \mini{\Belsome{A_0}}$ by $(\leqq 2)$. Since the minimal element in $\set{\Belsome(A_0)}$ is unique by $(\leqq 2)$, it follows that $\mini{\Belsome{A_0}} = \mini{\Belsome{A_1}}$, i.e. $ K \cro A_{0} = K \cro A_{1}$.
\end{proof}

\begin{proof}[Proof for Observation \ref{Observation on cro-syntax irrelevance}:]
Let $A \equiv B$. Suppose $A \cap (K \cro A) = \emptyset$, then $A \cap (K \cro B) = \emptyset$ due to $\cro$-regularity. Hence, $B \cap (K \cro B) = \emptyset$ by $\cro$-closure. It follows that $K \cro A = K \cro B = K$ by $\cro$-relative success. Suppose $A \cap (K \cro A) \neq \emptyset$, then $B \cap (K \cro A) \neq \emptyset$ by $\cro$-closure, so $B \cap (K \cro B) \neq \emptyset$ by $\cro$-regularity, and hence $A \cap (K \cro B) \neq \emptyset$ by $\cro$-closure. It follows that $K \cro A = K \cro B$ by $\cro$-reciprocity. Thus, $\cro$ satisfies syntax irrelevance in any case.
\end{proof}

\begin{proof}[Proof for Observation \ref{Observation that reciprocity is equivalent to cautiousness}:]
\textit{From left to right:} Let $A \subseteq B$ and $(K \cro B) \cap A \neq \emptyset$. Then, $A \neq \emptyset $ and hence $ (K \cro A) \cap A \neq \emptyset$ by $\cro$-regularity. Since $A \subseteq B$, it follows that $(K \cro A) \cap B \neq \emptyset$. Thus, $K \cro A =K \cro B$ by $\cro$-reciprocity.\\ 
\textit{From right to left:} Let $A \cap (K \cro B) \neq \emptyset$ and $B \cap (K \cro A) \neq \emptyset$. It follows that $(A \cup B) \cap (K \cro B) \neq \emptyset$. By $\cro$-regularity, it follows that $(A \cup B) \cap (K \cro (A \cup B)) \neq \emptyset$. So $A \cap  (K \cro (A \cup B)) \neq \emptyset$ or $B \cap  (K \cro (A \cup B)) \neq \emptyset$. Without loss of generality, let $A \cap  (K \cro (A \cup B)) \neq \emptyset$, then $K \cro A = K \cro (A \cup B)$ by $\cro$-cautiousness. It follows that $B \cap (K \cro (A \cup B)) = B \cap (K \cro A) \neq \emptyset$ and hence $K \cro B = K \cro (A \cup B)$ by $\cro$-cautiousness. So $K \cro A = K \cro (A \cup B) = K \cro B$.
\end{proof}

\begin{proof}[Proof for Observation \ref{Observation that dichotomy can be derived from reciprocity}:]
Suppose $(A \cup B) \cap (K \cro (A\cup B)) = \emptyset$, then $(A \cup B) \cap (K \cro A) = (A \cup B) \cap (K \cro B)=  \emptyset$ by $\cro$-regularity. So $A \cap (K \cro A) = B \cap (K \cro B) = \emptyset$ and hence $K \cro (A \cup B) = K \cro A = K \cro B = K$ by $\cro$-relative success. Suppose $(A \cup B) \cap (K \cro (A\cup B)) \neq \emptyset$, then $A \cap  (K \cro (A\cup B)) \neq \emptyset $ or $B \cap  (K \cro (A\cup B)) \neq \emptyset$. Let $A \cap  (K \cro (A\cup B)) \neq \emptyset $, then $A \cap (K \cro A) \neq \emptyset$ by $\cro$-regularity and hence $(A \cup B) \cap (K \cro A) \neq \emptyset$. It follows that $K \cro (A \cup B) = K \cro A$ by $\cro$-reciprocity. Similarly, we can show that $K \cro (A \cup B) = K \cro B$ holds in the case of $B \cap  (K \cro (A\cup B)) \neq \emptyset$. Thus, $\cro$ satisfies $\cro$-dichotomy in any case.
\end{proof}

\begin{proof}[Proof for Observation \ref{Observation that reciprocity is equivalent to strong reciprocity}:]
\textit{From right to left:} It follows immediately.\\
\textit{From left to right:}  Assume $(\star)$ that  $(K \cro A_1 ) \cap A_0 \neq \emptyset$, $\cdots$, $(K \cro A_n ) \cap A_{n-1} \neq \emptyset$ and $(K \cro A_{0} ) \cap A_{n} \neq \emptyset$ for some $n \geq 1$. We prove that $K \cro A_0 = K \cro A_1 = \cdots = K \cro A_n$ by mathematical induction on $n$. For $n =1$, this follows immediately from $\cro$-reciprocity. Let us hypothetically suppose that it holds for $n = k$ ($k \geq 1$), then we should show that it also holds for $n = k+1$.\\
Let $A = \bigcup_{0 \leq i \leq k+1} A_{i}$. It follows from $(\star)$ that $A \cap (K \cro A_i) \neq \emptyset$ for every $0 \leq i \leq k+1$. So $A \cap (K \cro A) \neq \emptyset$ by $\cro$-regularity. It follows that there exists some $j$ with $0 \leq j \leq k+1$ such that $A_j \cap (K \cro A) \neq \emptyset$. Moreover, according to $(\star)$, if $j = 0$ then $A_{k+1} \cap (K \cro A_j) \neq \emptyset$ else $A_{j-1} \cap (K \cro A_j) \neq \emptyset$. It follows that $A \cap (K \cro A_j) \neq \emptyset$ in any case. So $K \cro A_j = K \cro A$ by $\cro$-reciprocity. Hence, as $A \cap (K \cro A_i) \neq \emptyset$ for every $0 \leq i \leq k+1$, it follows from $(\star)$ and $\cro$-reciprocity that if $0<j \leq k+1$, then $K \cro A_j =K \cro A = K \cro A_{j-1}$ else $K \cro A_j =K \cro A = K \cro A_{k+1}$. In each case, the length of the loop is reduced to $k$. So, it follows from the hypothetical supposition that $K \cro A_0 = K \cro A_1 = \cdots = K \cro A_{k+1}$. Thus, $\cro$ satisfies strong reciprocity.
\end{proof}

\begin{proof}[Proof for Theorem \ref{Representation theorem for choice revision derived from descriptor revision of finite language}:]
\textit{From construction to postulates:} See Observation \ref{Observation on the postulates satisfied by the choice revision constructed from relational model}.\\
\textit{From postulates to construction:} Let $\mb{X}=\{ K\cro A \mid A \subseteq \mathcal{L} \mbox{ and } A \mbox{ is finite}\}$. Let $\leqq^{\prime}$ be a relation on $\mb{X}$ defined as $X \leqq^{\prime} Y$ iff there exist elements $A_0$, $\cdots$, $A_n$ of $\mathcal{L}$ such that $X = K \cro A_0$, $Y = K \cro A_n$ and $(K \cro A_1 ) \cap A_0 \neq \emptyset$, $\cdots$, $(K \cro A_n ) \cap A_{n-1} \neq \emptyset$. We first show that $\leqq^{\prime}$ is a partial order:\\
\textit{Reflexivity:} Let $X = K \cro A$. If $X =K$, since $K$ is belief set, $X = K \cro \{\taut\}$ by $\cro$-confirmation. Moreover, by $\cro$-closure, $\taut \in K \cro \{\taut\}$. So $X \leqq^{\prime} X$.  If $X \neq K$, then $A \cap (K \cro A) \neq \emptyset$ by relative success. It follows immediately that $X \leqq^{\prime} X$. Thus, $X \leqq^{\prime} X$ holds for every $X \in \mb{X}$.\\
\textit{Transitivity:} It follows immediately from the definition of $\leqq^{\prime}$.\\
\textit{Anti-symmetry:} By Observation \ref{Observation that reciprocity is equivalent to strong reciprocity}, $\cro$ satisfies $\cro$-strong reciprocity. It follows immediately from this and the definition of $\leqq^{\prime}$ that $\leqq^{\prime}$ is anti-symmetric.\\
So, given the axiom of choice, there exists a linear order $\leqq$ such that $\leqq^{\prime} \subseteq \leqq$.\footnote{See \cite{jech_axiom_2008}.} We will show that $(\mb{X}, \leqq)$ is the relational model we are looking for.\\
$(\mb{X} 1)$: It is immediate from $\cro$-closure.\\
$(\mb{X} 2)$: It is immediate from that $K \cro \emptyset \in \mb{X}$ and $\cro$ satisfies $\cro$-relative success.\\
$(\leqq 1)$: Since $K$ is a belief set, $K = K \cro \{\taut\} $ by $\cro$-confirmation. Moreover, by $\cro$-closure, $\taut \in X$ for every $X \in \mb{X}$. So $K \leqq^{\prime} X$ for for every $X \in \mb{X}$. Thus, as $\leqq^{\prime}  \subseteq \leqq$, $K \leqq X$ for every $X \in \mb{X}$.\\
$(\leqq 2)$: Since $\mathcal{L}$ is finite, it is easy to see that the quotient of its power set under the equivalence relation $\equiv$ is finite. Moreover, by Observation \ref{Observation on cro-syntax irrelevance}, $\cro$ satisfies syntax-irrelevance. It follows that $\mb{X}$ is finite.
And as we have proved, $\leqq$ is a linear order. So $\leqq$ is well-ordered and hence $(\leqq 2)$ holds.\\
In order to show that $\cro$ is based on $\relamod$, we need to consider two cases:\\
(i) $K \cro A =K$: In this case, we just need to prove that if $A \neq \emptyset$ and $\set{\Belsome{A}} \neq \emptyset$, then $K \in \set{\Belsome{A}}$. It follows from $A \neq \emptyset$ and $\set{\Belsome{A}} \neq \emptyset$ that there exists some $B$ such that $A \cap (K \cro B )\neq \emptyset$. So $A \cap (K \cro A) \neq \emptyset$ by $\cro$-regularity. So, $A \cap K \neq \emptyset$ and hence $K \in \set{\Belsome{A}}$.\\ 
(ii) $K \cro A \neq K$: In this case, we only need to show that $\set{\Belsome{A}} \neq \emptyset$ and $K \cro A = \mini{\Belsome{A}}$. Since $K \cro A \neq K$, $(K \cro A) \cap A \neq \emptyset$ by $\cro$-relative success.  So $\set{\Belsome{A}} \neq \emptyset$ and $K \cro A \leqq^{\prime} X$ for every $X \in \set{\Belsome{A}}$. This means that $K \cro A \leqq X$ for every $X \in \Belsome{\Phi(A)}$ since $\leqq^{\prime} \subseteq \leqq$. Moreover, as we have shown, $\leqq$ is a linear order. Thus, $K \cro A = \mini{\Belsome{A}}$.\\
To sum up (i) and (ii), $ \cro $ is based on $\relamod$.
\end{proof}

\begin{proof}[Proof for Observation \ref{Observation on the inter-derivability among success, relative , vacuity and regularity }:]
\textit{1.} Let $A \neq \emptyset$, then $A \cap (K \cro A) = \emptyset$. So, by $\cro$-relative success, $K \cro A = K$.\\
\textit{2.} Let $A \cap (K \cro A) = \emptyset$. Then, by $\cro$-success, it follows that $A = \emptyset$. So, by vacuity, $K \cro A = K$.\\
\textit{3.} Let $A \cap (K \cro B) \neq \emptyset$, then $A \neq \emptyset$. So, by $\cro$-success, $A \cap (K \cro A) \neq \emptyset$.
\end{proof}

\begin{proof}[Proof for Theorem \ref{Representation thoerem for choice revision additionally satisfying success and consistency }:]
\textit{From right to left:} We only need to check success, vacuity and consistency.\\
$\cro$-success: It follows immediately from $(\mb{X} 3)$.\\
$\cro$-vacuity: It follows immediately from \mtod{ }and \dtoc.\\
$\cro$-consistency: It follows immediately from $(\leqq 3)$.
\\
\textit{From left to right:} Let $\relamod$ be defined in the same way as the relational model constructed in the proof of Theorem \ref{Representation theorem for choice revision derived from descriptor revision of finite language}. By Observation \ref{Observation on the inter-derivability among success, relative , vacuity and regularity }, $\cro$ satisfies all postulates listed in Theorem \ref{Representation theorem for choice revision derived from descriptor revision of finite language}. So it is also true that $\relamod$ constructed in this way is indeed a relational model from which a choice revision $\cro$ can be derived. 
In order to complete the proof, we only need to check that $\relamod$ satisfies $(\mb{X} 3)$ and $(\leqq 3)$: Since $\cro$ satisfies $\cro$-success and $\cro$-closure, $K \cro \{\falsum \} = \conp{\falsum} \in \mb{X}$, i.e. $(\mb{X} 3)$ holds of $\relamod$. Moreover, since $\cro$ satisfies $\cro$-consistency, $K \cro \{\varphi \} \neq \conp{\falsum}$ for every $\varphi \not \equiv \falsum$. It follows that $K \cro \{\varphi\} = \mini{\mathfrak{B} \varphi } < \conp{\falsum}$. So, $(\leqq 3)$ also holds of $\relamod$. 
\end{proof}

\begin{proof}[Proof for Observation \ref{Observation on the interderivability among postulates on multiple believability relations}:]
\textit{1.} It follows from $\preceqc$-counter dominance that $A \cup B \preceqc A$ and $A \cup B \preceqc B$. Moreover, by $\preceqc$-union, $A \preceqc A\cup B $ or $B \preceqc A \cup B$. So $A \preceqc B$ or $B \preceqc A$ by $\preceqc$-transitivity.\\
\textit{2. From left to right:} We first prove that $\preceqc$-coupling holds for all singletons, i.e. if $\varphi \simeqc \psi$, then $\varphi \simeqc \varphi \wedge \psi$. Let $\varphi \simeqc \psi$ and $A = \{\varphi, \psi\}$. Since it is immediate from $\preceqc$-counter dominance that $\varphi \preceqc \varphi \wedge \psi$, we only need to show $\varphi \wedge \psi \preceqc \varphi$. By the first item of Lemma \ref{Lemma on the representation element} and $\preceqc$-transitivity, $A \simeqc \varphi$ and $A \simeqc \psi$. So, by the second item of Lemma \ref{Lemma on the representation element}, $A \simeqc A \owedge \varphi$ and $A \simeqc A \owedge \psi$. So, by $\preceqc$-weak coupling, $A \simeqc A \owedge (\varphi \wedge \psi)$. By $\preceqc$-counter dominance, $\varphi \wedge \psi \preceqc A \owedge (\varphi \wedge \psi)$. So $\varphi \wedge \psi \preceqc A \owedge (\varphi \wedge \psi) \simeqc A \simeqc \varphi$ and hence $\varphi \wedge \psi \preceqc \varphi$ by $\preceqc$-transitivity.\\
Now we prove that $\preceqc$-coupling holds in general. Let $A \simeqc B$. If $A = \emptyset$, it follows immediately from $\preceqc$-counter-dominance that $\emptyset = A \simeqc A \owedge B = \emptyset$. If $B = \emptyset$, it follows from $A \simeqc B = \emptyset$ that $A \simeqc A \owedge B = \emptyset$. If $A \neq \emptyset$ and $B \neq \emptyset$, then there exist $\varphi \in A$ and $\psi \in B$ such that $\varphi \simeqc A$ and $\psi \simeqc B$ by the first item of Lemma \ref{Lemma on the representation element}. So, by $\preceqc$-transitivity, $\varphi \simeqc \psi$. So $\varphi \simeqc \varphi \wedge \psi$ as we have shown that $\preceqc$-coupling holds for all singletons. Moreover, $A \owedge B \preceqc \varphi \wedge \psi$ by $\preceqc$-counter-dominance. So $A \owedge B \preceqc \varphi \wedge \psi \simeqc \varphi \simeqc A$ and hence $A \owedge B \preceqc A$ by $\preceqc$-transitivity. Moreover, $A \preceqc A \owedge B$ by $\preceqc$-counter-dominance. Thus, $A \simeqc A \owedge B$. \\
\textit{From right to left:} Let $A \simeqc A \owedge B$ and $A \simeqc A \owedge C$. By $\preceqc$-transitivity, $A \owedge B \simeqc A \owedge C$. So $A \owedge B \simeqc (A \owedge B) \owedge (A \owedge C)$ by $\preceqc$-coupling and hence $A \simeqc (A \owedge B) \owedge (A \owedge C)$ by $\preceqc$-transitivity. In order to complete the proof, we only need to show that $(A\owedge B) \owedge (A \owedge C) \simeqc A \owedge (B \owedge C)$. 
If $A = \emptyset$ or $B = \emptyset$ or $C = \emptyset$, then $\emptyset = (A\owedge B) \owedge (A \owedge C)  \simeqc  A \owedge (B \owedge C) = \emptyset$ by $\preceqc$-counter-dominance. If they are all non-empty, it also follows immediately from $\preceqc$-counter-dominance that $(A\owedge B) \owedge (A \owedge C) \simeqc A \owedge B \owedge C$.
\end{proof}

\begin{proof}[Proof for Observation \ref{observation for reduction to single sentence}:]
\textit{1.} \textit{From left to right:} Let $A \preceqc B$. By the first item of Lemma \ref{Lemma on the representation element}, there exists some $\varphi$ in $A$ such that $\varphi \simeqc A$. Thus, $\varphi \preceqc B$ by $\preceqc$-transitivity. 
\textit{From right to left:} Suppose there exists some $\varphi \in A$ such that $\varphi \preceqc B$. By $\preceqc$-counter dominance, $A \preceqc \varphi$. Thus, $A \preceqc B$ by $\preceqc$-transitivity.\\
\textit{2.} \textit{From left to right:} Let $A \preceqc B$. By $\preceqc$-counter dominance, $B \preceqc \varphi$ for every $\varphi \in B$. Thus, by $\preceqc$-transitivity, $A \preceqc \varphi$ for all $\varphi \in B$. 
\textit{From right to left:} Let $A \preceqc \varphi$ for all $\varphi \in B$. By the first item of Lemma \ref{Lemma on the representation element}, there exists some $\psi$ in $B$ such that $\psi \simeqc B$. So $A \preceqc \psi \simeqc B$ and hence $A \preceqc B$ by $\preceqc$-transitivity.
\end{proof}

\begin{proof}[Proof for Theorem \ref{Theorem on the translation between single and multi believability relations}:]
\textit{1.} Let $\preceq$ be a quasi-linear believability relation and $\preceqc$ constructed from $\preceq$ through \btomb. We first check that $\preceqc$ is a standard multiple believability relation.
\\
\textit{$\preceqc$-determination:} It follows immediately from \btomb{ }that $\emptyset \not \preceqc A$ and $A \preceqc \emptyset$ for every non-empty $A$. Thus, $\emptyset \precc A$ for every non-empty $A$. 
\\
\textit{$\preceqc$-union:} If $A = \emptyset$ and $ B = \emptyset$, then it follows immediately from \btomb{ }that $\preceqc$-union holds for $\preceqc$. If $A \neq \emptyset$ or $ B \neq \emptyset$, then $A \cup B \neq \emptyset$. Since both $A$ and $B$ are finite and $\preceq$ satisfies $\preceq$-completeness, there exists some $\varphi \in A \cup B$ such that $\varphi \preceq \lambda$ for every $\lambda \in A \cup B$. It follows that there exists some $\varphi \in A $ such that $\varphi \preceq \lambda$ for every $\lambda \in A \cup B$ or there exists some $\varphi \in B $ such that $\varphi \preceq \lambda$ for every $\lambda \in A \cup B$. So, by \btomb, $A \preceqc A \cup B$ or $B \preceqc A \cup B$.
\\
\textit{$\preceqc$-transitivity:} Let $A \preceqc B$ and $B \preceqc C$. If $C = \emptyset$, then it follows immediately from \btomb{ }that $A \preceqc C$. If $C \neq \emptyset$,  then $A \neq \emptyset$ and $B \neq \emptyset$ since $A \preceqc B$, $B \preceqc C$ and $\preceqc$ satisfies $\preceqc$-determination as we have shown. So, by \btomb, there exists some $\varphi \in A$ such that $\varphi \preceq \psi$ for every $\psi \in B$ and there exists some $\psi \in B$ such that $\psi \preceq \lambda$ for every $\lambda \in C$. So, by $\preceq$-transitivity, there exists  some $\varphi \in A$ such that $\varphi \preceq \lambda$ for every $\lambda \in C$. Thus, $A \preceqc C$ by \btomb.
\\
\textit{$\preceqc$-counter-dominance:} Assume $(\star)$ that for every $\varphi \in B$, there exists some $\psi \in A$ such that $\varphi \vdash \psi$. If $B = \emptyset$, then it follows directly from \btomb{ }that $A \preceqc B$. If $B \neq \emptyset$,  since $B$ is finite and $\preceq$ satisfies $\preceq$-completeness, there exists some $\varphi \in B$ such that $\varphi \preceq \lambda$ for every $\lambda \in B$. Moreover, by $(\star)$, there exists some $\psi \in A$ such that $\varphi \vdash \psi$, and hence $\psi \preceq \varphi$ by $\preceq$-counter dominance. So, by $\preceq$-transitivity, $\psi \preceq \lambda$ for every $\lambda \in B$. Thus, $A \preceqc B$ by \btomb. 
\\
\textit{$\preceqc$-coupling:} Let $A \simeqc B$. If $A = \emptyset$, then it follows directly from \btomb{ }that $A \simeqc A \owedge B$. If $B = \emptyset$, then it follows immediately from $A \simeqc B = \emptyset$ that $A \simeqc A \owedge B = \emptyset$. If $A \neq \emptyset$ and $B\neq \emptyset$, since $\preceqc$ satisfies $\preceqc$-union, $\preceqc$-transitivity and $\preceqc$-counter dominance as we have shown, by the first item of  Lemma \ref{Lemma on the representation element}, there exist $\varphi \in A$ and $\psi \in B$ such that $\varphi \simeqc A$ and $\psi \simeqc B$. So, by $\preceqc$-transitivity, $\varphi \simeqc \psi$ and hence $\varphi \simeq \psi$ by \btomb. It follows that $\varphi \simeq \varphi \wedge \psi$ by $\preceq$-coupling. So $\varphi \simeqc \varphi \wedge \psi$ by \btomb. Moreover, $A \owedge B \preceqc \varphi \wedge \psi$ by $\preceqc$-counter dominance. So, $A \owedge B \preceqc \varphi \wedge \psi \simeqc \varphi \simeqc A$ and hence $A \owedge B \preceqc A$ by $\preceqc$-transitivity. It is immediate from $\preceqc$-counter dominance that $A \preceqc A \owedge B$. Thus, $A \simeqc A \owedge B$. 
\\
\textit{$\preceqc$-minimality:} \textit{From left to right:} Suppose $A \preceqc B$ for all $B$. It follows that $A \preceqc \taut$. So there exists some $\varphi \in A$ such that $\varphi \preceq \taut$ by \btomb. So $\varphi \preceq \psi$ for all $\psi$ by $\preceq$-counter dominance and $\preceq$-transitivity. So, by $\preceq$-minimality, $\varphi \in K$. Thus, $A \cap K \neq \emptyset$. \textit{From right to left:} Let $A \cap K \neq \emptyset$, i.e. there exists some $\varphi \in A \cap K$. Since $\varphi \in K$, $\varphi \preceq \psi$ for all $\psi$ by $\preceq$-minimality. So $A \preceqc B$ for all $B$ by \btomb.
\\
\textit{$\preceqc$-maximality:} Let $A$ be non-empty and $B \preceqc A$ for all non-empty $B$. Then, $\perp \preceqc A$ and hence by \btomb{ }$\falsum \preceq \varphi$ for every $\varphi \in A$. So, due to $\preceq$-counter dominance and $\preceq$-transitivity, for every $\varphi \in A$, $\psi \preceq \varphi$ for all $\varphi \in \mathcal{L}$. Hence, for every $\varphi \in A$, $\varphi \dashv \Vdash \falsum$ due to $\preceq$-maximality. Thus, $A \equiv \{\falsum\}$.\\
Let $\preceq^{\prime}$ be the believability relation derived from $\preceqc$ through \mbtob. It is easy to see that $\varphi \preceq \psi$ if and only if $\varphi \preceqc \psi$ if and only if $\varphi \preceq^{\prime} \psi$. Thus, $\preceq$ can be retrieved from $\preceqc$ through \mbtob.\\
\\
\textit{2.} It is easy to see that $\preceq$ satisfies $\preceq$-transitivity, $\preceq$-counter dominance, $\preceq$-coupling and $\preceq$-completeness. In what follows we only check $\preceq$-minimality and $\preceq$-maximality.
\\
\textit{Minimality:} \textit{From left to right:} Let $\varphi \in K$. Then, $\varphi \preceqc \psi$ for all $\psi$ due to $\preceqc$-minimality. Thus, $\varphi \preceq \psi$ for all $\psi$ by \mbtob. \textit{From right to left:} Let $\varphi \preceq \psi$ for all $\psi$. Then $\varphi \preceqc \psi$ for all $\psi$ by \mbtob. So, by the second item of Observation \ref{observation for reduction to single sentence}, $\varphi \preceqc B$ for all non-empty $B$. Moreover, due to $\preceqc$-counter-dominance, $\varphi \preceqc \emptyset$. Thus, by $\preceqc$-minimality, $\varphi \in K$.
\\
\textit{Maximality:}  Let $\psi \preceq \varphi$ for all $\psi$. Then $\psi \preceqc \varphi$ for all $\psi$ by \mbtob. So, by the first item of Observation \ref{observation for reduction to single sentence}, $B \preceqc \varphi$ for all non-empty $B$. So, due to $\preceqc$- maximality, $\varphi \dashv \Vdash \falsum $. 
\\
In order to prove that $\preceqc$ can be retrieved from $\preceq$ through \btomb, let $\preceqc^{\prime}$ be the multi-believability relation derived from $\preceq$ through \btomb. We need to show that $A \preceqc B $ if and only if $A \preceqc^{\prime} B$.
\\ 
\textit{From left to right:} Let $ A \preceqc B$. If $B = \emptyset$, it follows directly from \btomb{ }that $A \preceqc^{\prime} B$. If $A = \emptyset$, then $B=  \emptyset$ by $\preceqc$-determination. It also follows immediately from \btomb{ }that $A \preceqc^{\prime} B$. If $A \neq \emptyset$ and $B \neq \emptyset$, then, by Observation \ref{observation for reduction to single sentence}, there exists some $\varphi \in A$ such that $\varphi \preceqc \psi$ for all $\psi \in B$. So, by \mbtob, there exists some $\varphi \in A$ such that $\varphi \preceq \psi$ for all $\psi \in B$. Hence, by \btomb, $A \preceqc^{\prime} B$.
\\  
\textit{From right to left:} Let $A \preceqc^{\prime} B$. If $B = \emptyset$, then it follows directly from $\preceqc$-counter dominance that $A \preceqc B$. If $B \neq \emptyset$, then there exists some $\varphi \in A$ such that $\varphi \preceq \psi$ for all $\psi \in B$ by \btomb. So there exists some $\varphi \in A$ such that $\varphi \preceqc \psi$ for all $\psi \in B$ by \mbtob. Hence, by Observation \ref{observation for reduction to single sentence}, $A \preceqc B$.
\end{proof}

\begin{proof}[Proof for Theorem \ref{Representation theorem for choice revision based on weak multiple believability relation}:]
\textit{From construction to postulates:} $\cro$-closure: We need to prove that $K \cro A$ is a belief set for every $A$. If $\emptyset \preceqc A$, it follows from \mbtoc{ }that $K \cro A = K$. So $K \cro A$ is  belief set since $K$ is  belief set. If $\emptyset \not \preceqc A$, then $A \precc \emptyset$ by $\preceqc$-counter dominance and hence $K \cro A =\{\varphi \mid A \simeqc A \owedge \varphi \}$ by \mbtoc. Moreover, it follows from $\emptyset \not \preceqc A$ and $\preceqc$-counter dominance that $A \neq \emptyset$. So, by Lemma \ref{Lemma on the representation element}, $K \cro A =\{\varphi \mid A \simeqc A \owedge \varphi \} \neq \emptyset$. Let $\varphi \in K \cro A$, $\psi \in K \cro A$ and $\varphi \wedge \psi \vdash \lambda$, in order to complete the proof, we need to show that $\lambda \in K \cro A$. By $\preceqc$-weak coupling, it follows from $A \simeqc A \owedge \varphi$ and $A \simeqc A \owedge \psi$ that $A \simeqc A \owedge \varphi \wedge \psi$. By $\preceqc$-counter dominance and $\varphi \wedge \psi \vdash \lambda$, $A \owedge \lambda \preceqc A \owedge \varphi \wedge \psi$. So $A \owedge \lambda \preceqc A$ by $\preceqc$-transitivity. Moreover, $A \preceqc A \owedge \lambda$ by $\preceqc$-counter dominance. Thus, $A \simeqc A \owedge \lambda$, i.e. $\lambda \in K \cro A$.
\\
\textit{$\cro$-relative success:} Let $K \cro A \neq K$. Then, by \mbtoc, $A \precc \emptyset$. So, by $\preceqc$-counter dominance, $A \neq \emptyset$ and hence, by Lemma \ref{Lemma on the representation element}, there exists some $\varphi \in A$ such that $A \simeqc A \owedge \varphi$. Moreover, by \mbtoc, $K \cro A = \{\varphi \mid A \simeqc A \owedge \varphi\} $ when $A \precc \emptyset$. Thus, $A \cap K \cro A \neq \emptyset$.
\\
\textit{$\preceqc$-confirmation:} Let $A \cap K \neq \emptyset$, i.e. there exists some $\psi \in A \cap K$. Suppose $A \not \precc \emptyset$, then it follows immediately from \mbtoc{ }that $K \cro A =K$. Suppose $A \prec \emptyset$, then, by \mbtoc, we only need to show that $K = \{\varphi \mid A \simeqc A \owedge \varphi\}$. \textit{From left to right inclusion direction:} Let $\lambda \in K$. By $\preceqc$-counter dominance, $A \owedge \lambda \preceqc \lambda \wedge \psi$. Since $K$ is a belief set, it follows from $\lambda \in K$ and $\psi \in K$ that $\lambda \wedge \psi \in K$. Hence, $\lambda \wedge \psi \preceqc A$ by $\preceqc$-minimality. So $A \owedge \lambda \preceqc  A$ by $\preceqc$-transitivity. Moreover, $A \preceqc A \owedge \lambda$ by $\preceqc$-counter dominance. So, $\lambda \in \{\varphi \mid A \simeqc A \owedge \varphi \}$. \textit{From right to left inclusion direction:} Let $\lambda \in \{\varphi \mid A \simeqc A \owedge \varphi \}$. By $\preceqc$-counter dominance, $\lambda \preceqc A \owedge \lambda$ and $A \preceqc \psi$. So, $\lambda \preceqc A \owedge \lambda \simeqc A \preceqc \psi$ and hence $\lambda \preceqc \psi$ by $\preceqc$-transitivity. Since $\psi \in K$, $\psi \preceqc B$ for all $B$ by $\preceqc$-minimality. So, by $\preceqc$-transitivity, $\lambda \preceqc B$ for all $B$ and hence $\lambda \in K$ by $\preceqc$-minimality. To sum up (i) and (ii), $K = \{\varphi \mid A \simeqc A \owedge \varphi\} = K \cro A$ when $A \precc \emptyset$.
\\
\textit{$\cro$-regularity:} Let $A \cap (K \cro B) \neq \emptyset$. (i) Let $K \cro B = K$, then $A \cap K = A \cap (K \cro B) \neq \emptyset$. As we have shown that $\cro$-confirmation holds of $\cro$, it follows that $K \cro A =K$. So $A \cap (K \cro A) = A \cap K = A \cap (K \cro B) \neq \emptyset$. (ii) Let $K \cro B \neq K$, then $B \precc \emptyset$ and $K \cro B = \{\varphi \mid B \simeqc B \owedge \varphi \}$ by \mbtoc. It follows from $A \cap (K \cro B) \neq \emptyset$ and $K \cro B = \{\varphi \mid B \simeqc B \owedge \varphi \}$ that there exists some $\psi \in A$ such that $B \simeqc B \owedge \psi$. By $\preceqc$-counter dominance, $A \preceqc \psi$ and $\psi \preceqc B \owedge \psi$. So, $A \preceqc \psi \preceqc B \owedge \psi \simeqc B$ and hence $A \preceqc B$ by $\preceqc$-transitivity. Moreover, it follows from $B \precc \emptyset$ that $\emptyset \not \preceqc B$. So, by $\preceqc$-transitivity, $\emptyset \not \preceqc A $. By $\preceqc$-counter dominance, $A \preceqc \emptyset$. So, $A \precc \emptyset$ and hence $A =\{\varphi \mid A \simeqc A \owedge \varphi\}$ by \mbtoc. Also, $A \neq \emptyset$ since $A \cap (K \cro B) \neq \emptyset$. So, by Lemma \ref{Lemma on the representation element}, $A \cap (K \cro A) = A \cap \{\varphi \mid A \simeqc A \owedge \varphi\} \neq \emptyset$. To sum up (i) and (ii), $\cro$-regularity holds of $\cro$.
\\
\textit{$\cro$-reciprocity:} Let $A \cap (K \cro B) \neq \emptyset$ and $B \cap (K \cro A) \neq \emptyset$. (i) Let $K \cro A =K$ or $K \cro B = K$, then it follows immediately from $A \cap (K \cro B) \neq \emptyset$ and $B \cap (K \cro A) \neq \emptyset$ that $K \cro A = K \cro B = K$ by $\cro$-confirmation, which has been proved. Suppose $K \cro A \neq K$ and $K \cro B \neq K$, then $K \cro A =\{\varphi \mid A \simeqc A \owedge \varphi  \}$ and $K \cro B =\{\varphi \mid B \simeqc B \owedge \varphi  \}$ by \mbtoc. Next we show that $K \cro A \subseteq K \cro B$. (The converse direction can be proved in the same way.) Let $\varphi \in K \cro A = \{\varphi \mid A \simeqc A \owedge \varphi \}$. As it is immediate from $\preceqc$-counter dominance that $B \preceqc B \owedge \varphi$, we only need to show that $B \owedge \varphi \preceqc B$. Since $B \cap (K \cro A) \neq \emptyset$, there exists some $\psi \in B$ such that $A \simeqc A \owedge \psi$. By the second item of Observation \ref{Observation on the interderivability among postulates on multiple believability relations}, $\preceqc$ satisfies $\preceqc$-weak coupling, so $A \simeqc A \owedge \varphi \wedge \psi$. Due to $\preceqc$-counter dominance, $\varphi \wedge \psi \preceqc A \owedge (\varphi \wedge \psi)$. So $\varphi \wedge \psi \preceqc A$ by $\preceqc$-transitivity. Furthermore, since $A \cap (K \cro B) \neq \emptyset $, there exists some $\lambda \in A$ such that $B \simeqc B \owedge \lambda$. By $\preceqc$-counter dominance, $A \preceqc \lambda \preceqc B \owedge \lambda$. So $\varphi \wedge \psi \preceqc B$ by $\preceqc$-transitivity. Moreover, due to $\psi \in B$, $B \owedge \varphi \preceqc \varphi \wedge \psi$ by $\preceqc$-counter dominance. So, by $\preceqc$-transitivity, $B \owedge \varphi \preceqc B $. To sum up (i) and (ii), $\cro$-reciprocity holds of $\cro$. 
\\
\\
\textit{From postulates to construction:} Let $\cro$ be a choice revision operation satisfying $(\cro 1)$ through $(\cro 5)$. Let $\preceqc$ be a relation derived from $\cro$ in the following way:\begin{enumerate}
\item[] \ctomb \,\,\,\,\,\,$A \preceqc B$ iff either (i) $B \cap (K \cro B) = \emptyset$ or (ii) $A \cap (K \cro A) \neq \emptyset$ and there exists $A_0, \cdots, A_n$ such that $K \cro A= K \cro A_0$, $ K \cro B= K \cro A_n$ and $ A_0 \cap (K \cro A_1) \neq \emptyset, \cdots, A_{n-1} \cap (K \cro A_n) \neq \emptyset$.
\end{enumerate}
Let us first prove the following proposition:

\begin{enumerate}
\item[] $(\star)$ Let $\cro$ be a operation satisfying basic postulates on choice revision and $\preceqc$ constructed from $\cro$ in the way of \ctomb, then $K \cro A = K \cro B$ whenever $A \simeqc B$.
\end{enumerate}
\textit{Proof for Proposition $(\star)$}: Let $A \simeqc B$. There are two cases. (i) $B \cap (K \cro B) = \emptyset$. Then it follows from $B \preceqc A$ and \ctomb{ }that $A \cap (K \cro A) = \emptyset$. So $K \cro A = K \cro B =K$ by $\cro$-relative success. (ii) $B \cap (K \cro B) \neq \emptyset$. Then it follows from $A \preceqc B$ and \ctomb{ }that $A \cap (K \cro A) \neq \emptyset$ and there exist $A_0, \cdots, A_n$ such that $K \cro A= K \cro A_0$, $ K \cro B= K \cro A_n$ and $ A_0 \cap (K \cro A_1) \neq \emptyset, \cdots, A_{n-1} \cap (K \cro A_n) \neq \emptyset$. Since $A \cap (K \cro A) \neq \emptyset$, it follows from $B \preceqc A$ and \ctomb{ }that there exist $B_0, \cdots, B_m$ such that $K \cro B= K \cro B_0$, $ K \cro A= K \cro B_m$ and $ B_0 \cap (K \cro B_1) \neq \emptyset, \cdots, B_{m-1} \cap (K \cro B_m) \neq \emptyset$. So, it holds that $ A \cap (K \cro A_0) \neq \emptyset, \, A_0 \cap (K \cro A_1) \neq \emptyset, \cdots, A_{n-1} \cap (K \cro B) \neq \emptyset, \, B \cap (K \cro B_0) \neq \emptyset, \, B_0 \cap (K \cro B_1) \neq \emptyset, \cdots, \mbox{ and } B_{m-1} \cap (K \cro A) \neq \emptyset$. By Observation \ref{Observation that reciprocity is equivalent to strong reciprocity}, $\cro$ satisfies $\cro$-strong reciprocity. So it follows that $K \cro A =K \cro B$. 
\\
Now we turn back to check the properties of the derived $\preceqc$.\\
\textit{$\preceqc$-transitivity:} Let $A \preceqc B$ and $B \preceqc C$. There are two cases. (i) $C \cap (K \cro C) = \emptyset$. Then, it follows immediately from \ctomb{ }that $A \preceqc C$. (ii) $C \cap (K \cro C) \neq \emptyset$. Then it follows from $B \preceqc C$ and \ctomb{ }that $B \cap (K \cro B) \neq \emptyset$ and there exist $B_0, \cdots, B_n$ such that $K \cro B= K \cro B_0$, $ K \cro C= K \cro B_n$ and $ B_0 \cap (K \cro B_1) \neq \emptyset, \cdots, B_{n-1} \cap (K \cro B_n) \neq \emptyset$. Since $B \cap (K \cro B) \neq \emptyset$, it follows from $A \preceqc B$ and \ctomb{ }that $A \cap (K \cro A) \neq \emptyset$ and there exist $A_0, \cdots, A_m$ such that $K \cro A= K \cro A_0$, $ K \cro B= K \cro A_m$ and $ A_0 \cap (K \cro A_1) \neq \emptyset, \cdots, A_{m-1} \cap (K \cro A_m) \neq \emptyset$. So, by \ctomb, $A \preceqc C$.
\\
\textit{$\preceqc$-coupling:} Let $A \simeqc B$. There are three cases. (i) $A \cap (K \cro A) = \emptyset$. Then, $(A \owedge B) \cap (K \cro (A \owedge B) )= \emptyset$. Otherwise, it follows from 
$(A \owedge B) \cap (K \cro (A \owedge B)) \neq \emptyset$ that $A \cap  (K \cro (A \owedge B)) \neq \emptyset$ by $\cro$-closure, and hence $A \cap (K \cro A) \neq \emptyset$ by $\cro$-regularity, which contradicts that $A \cap (K \cro A) = \emptyset$. So, by \ctomb, $A \simeqc A \owedge B$. (ii) $B \cap (K \cro B) =\emptyset$. Then, it follows from $B \preceqc A$ and \ctomb{ }that $A \cap (K \cro A) =\emptyset$. So, this case is reducible to (i). (iii) $A \cap (K \cro A) \neq \emptyset$ and $B \cap (K \cro B) \neq \emptyset$. By Proposition $(\star)$, it follows from $A \simeqc B$ that $K \cro A =K \cro B$. So $B \cap (K \cro A) = B \cap (K \cro B) \neq \emptyset$. It follows from $A \cap (K \cro A) \neq \emptyset$ and $B \cap (K \cro A) \neq \emptyset$ that $(A \owedge B) \cap (K \cro A) \neq \emptyset$ by $\cro$-closure. So, $(A \owedge B) \cap (K \cro (A \owedge B )) \neq \emptyset$ by $\cro$-regularity, and hence $A \cap (K \cro (A \owedge B )) \neq \emptyset$ by $\cro$-closure. So, by \ctomb, $A \simeqc A \owedge B$.
\\
\textit{$\preceqc$-counter dominance:} Let $A$ and $B$ satisfy $(\dagger)$: for every $\varphi \in B$ there exists $\psi \in A$ such that $\varphi \vdash \psi$. Suppose $B \cap (K \cro B) = \emptyset$, then it follows directly from \ctomb{ }that $A \preceqc B$. Suppose $B \cap (K \cro B) \neq \emptyset$, then $A \cap (K \cro B) \neq \emptyset$ by $\dagger$ and $\cro$-closure. Furthermore, $A \cap (K \cro A) \neq \emptyset$ by $\cro$-regularity. So, by \ctomb, $A \preceqc B$.
\\
\textit{$\preceqc$-minimality:} \textit{From left to right:} Let $A \preceqc B$ for all $B$. Then, $A \preceqc \taut$. Since $\taut \in K \cro \taut$ by $\cro$-closure, it follows from $A \preceqc \taut$ and \ctomb{ }that $A \cap (K \cro A) \neq \emptyset$ and there exist $A_0, \cdots, A_n$ such that $K \cro A= K \cro A_0$, $ K \cro \taut= K \cro A_n$ and $ A_0 \cap (K \cro A_1) \neq \emptyset, \cdots, A_{n-1} \cap (K \cro A_n) \neq \emptyset$. Moreover, by $\cro$-closure, $\taut \in K \cro A_0$. By Observation \ref{Observation that reciprocity is equivalent to strong reciprocity},  $\cro$-strong reciprocity holds of $\cro$, so $K \cro A = K \cro \taut$. As $K$ is a belief set, by $\cro$-confirmation, $K \cro \taut = K$. So, $A \cap K = A \cap (K \cro \taut) = A \cap (K \cro A) \neq \emptyset $. \textit{From right to left:} Let $A \cap K \neq \emptyset$. Then $K \cro A = K$ by $\cro$-confirmation. So $A \cap (K \cro A) = A \cap K \neq \emptyset$. Moreover, since $K$ is a belief set, by $\cro$-confirmation, $K \cro \taut = K = K \cro A$. By $\cro$-closure, $\taut \in K \cro B$ for all $B$. So, by \ctomb, $A \preceqc B$ for all $B$.
\\
\textit{$\preceqc$-union:} Suppose $(A \cup B) \cap (K \cro (A \cup B)) = \emptyset$, then $\preceqc$-union follows immediately from \ctomb. Suppose $(A \cup B) \cap (K \cro (A \cup B)) \neq \emptyset$, then $A \cap (K \cro (A \cup B)) \neq \emptyset$ or $B \cap (K \cro (A \cup B)) \neq \emptyset$. Let $A \cap (K \cro (A \cup B)) \neq \emptyset$, then $A \cap (K \cro A) \neq \emptyset$ by $\cro$-regularity. So, by \ctomb, $A \preceqc A\cup B$. Similarly, $B \preceqc A \cup B$ follows from $B \cap (K \cro (A \cup B)) \neq \emptyset$. So, it holds that $A \preceqc A\cup B$ or $B \preceqc A \cup B$.
\\
Finally, we show that $\cro$ can be retrieved from $\preceqc$ through \mbtoc. Let $\cro^{\prime}$ be the operation constructed from $\preceqc$ by \mbtoc. We will show that $\cro = \cro^{\prime}$.
\\
Suppose $A \prec \emptyset$, then $A \cap (K \cro A) \neq \emptyset$ by \ctomb{ }and $K \cro A^{\prime} = \{\varphi \mid A \simeqc A \owedge \varphi\}$ by \mbtoc. (i) Let $\varphi \in K \cro A$, then $(A \owedge \varphi) \cap (K \cro A) \neq \emptyset$ by $\cro$-closure. So $(A \owedge \varphi) \cap (K \cro (A \owedge \varphi ) ) \neq \emptyset$ by $\cro$-regularity, and hence $A \cap (K \cro (A \owedge \varphi ) ) \neq \emptyset$ by $\cro$-closure. So, by \ctomb, $A \simeqc A \owedge \varphi$, i.e. $\varphi \in K \cro^{\prime} A$. (ii) Let $\varphi \in K \cro^{\prime} A$, then $A \simeqc A \owedge \varphi $. It follows from this and $A \precc \emptyset$ that $A \owedge \varphi \precc \emptyset$ by $\preceq$-transitivity. So, by \ctomb, $ (A \owedge \varphi) \cap (K \cro (A \owedge \varphi ) ) \neq \emptyset$. It follows that $\varphi \in (K \cro (A \owedge \varphi ) )$ by $\cro$-closure. Moreover, by Proposition $(\star)$, it follows from $A \simeqc A \owedge \varphi$ that $K \cro A = K \cro (A \owedge \varphi)$. So, $\varphi \in K \cro A$. To sum up (i) and (ii), $K \cro A = K \cro^{\prime} A$.
\\
Suppose $A \not \precc \emptyset$, then $K \cro^{\prime} A = K$ by \mbtoc. Moreover, due to $\preceqc$-counter dominance, it follows from $A \not \precc \emptyset$ that $A \simeqc \emptyset$. So, by Proposition $(\star)$, $K \cro A = K \cro\emptyset $. Since $\emptyset \cap (K \cro \emptyset) = \emptyset$, $K \cro \emptyset = K$ by $\cro$-relative success. So, $K \cro A= K \cro \emptyset =K =K \cro^{\prime} A$.
\\
Thus, $K \cro A = K \cro^{\prime} A$ for all $A$.
\end{proof}

\begin{proof}[Proof for Theorem \ref{Representation theorem for choice revision based on standard multiple believability relation}:]
\textit{From construction to postulates:} As we have proved Theorem \ref{Representation theorem for choice revision based on weak multiple believability relation}, here we only check $\cro$-vacuity, $\cro$-success and $\cro$-consistency.
\\
\textit{$\cro$-vacuity:} It is immediate from Observation \ref{Observation on the inter-derivability among success, relative , vacuity and regularity }.
\\
\textit{$\cro$-success:} Let $A \neq \emptyset$. By $\preceqc$-determination, $A \precc \emptyset$. So, by \mbtoc, $K \cro A =  \{\varphi \mid A \simeqc A \owedge \varphi \}$. Hence, by Lemma \ref{Lemma on the representation element}, $A \cap (K \cro A) \neq \emptyset$.
\\
\textit{$\cro$-consistency:} Let $A \not \equiv \{\falsum\}$. There are two cases. (i) $A = \emptyset$. Then $K \cro A = K$ by $\cro$-vacuity which has been shown holding of $\cro$. That $K \cro A$ is consistent follows immediately from our assumption that $K$ is consistent. (ii) $A \neq \emptyset$. Then $A \precc \emptyset$ by $\preceqc$-determination. So, $K \cro A = \{\varphi \mid A \simeqc A \owedge \varphi \}$. Suppose towards contradiction that $K \cro A \vdash \falsum$. It follows from $K \cro A \vdash \falsum$ and $\cro$-closure that $A \simeqc A \owedge \falsum$. So, by $\preceqc$ counter dominance and $\preceqc$-transitivity, $\falsum \preceqc A$ and hence $B \preceqc A$ for all non-empty $B$. So, by $\preceqc$-maximality, it follows that $A \equiv \{\falsum\}$, which contradicts $A \not \equiv \{\falsum\}$. Thus, $\cro$-consistency holds of $\cro$.
\\
\\
\textit{2. From postulates to construction:} Let $\preceqc$ be derived from $\cro$ in the way of \ctomb. Given Theorem \ref{Representation theorem for choice revision based on weak multiple believability relation}, in order to show that $\preceqc$ is a standard multiple believability relation, we only need to check $\preceqc$-maximality and $\cro$-determination.  
\\
\textit{$\preceqc$-maximality:} Let $B \neq \emptyset$ and $A \preceqc B$ for all non-empty $A$. Then, $\{\falsum\} \preceqc B$. Moreover, by $\cro$-success, $B \cap (K \cro B) \neq \emptyset$ and $\falsum \in K \cro \{\falsum\} $. So, by \ctomb, there exist $A_0, \cdots, A_n$ such that $K \cro \{\falsum\}= K \cro A_0$, $ K \cro B= K \cro A_n$ and $ A_0 \cap (K \cro A_1) \neq \emptyset, \cdots, A_{n-1} \cap (K \cro A_n) \neq \emptyset$. By $\cro$-success and closure, $K \cro \{\falsum\} = \mathcal{L}$, so $B \cap K \cro \{\falsum\} \neq \emptyset$. It follows that $K \cro B = K \cro \{\falsum\}$ since $\cro$ satisfies $\cro$-strong reciprocity according to Observation \ref{Observation that reciprocity is equivalent to strong reciprocity}. So, $K \cro B \vdash \falsum$. Hence, by $\cro$-consistency, $B \equiv \{\falsum\}$.
\\
\textit{$\preceqc$-determination:} Let $A \neq \emptyset$. It follows immediately from $\cro$-success and \ctomb{ }that $\emptyset \not \preceqc A$. Moreover, $A \preceqc \emptyset$ by $\preceqc$-counter dominance. So $A \precc \emptyset$.
\\
Furthermore, by the same argument that was presented in the proof of Theorem \ref{Representation theorem for choice revision based on weak multiple believability relation}, it follows that $\cro$ can be retrieved from $\preceqc$ through \mbtoc.
\end{proof} 
\bibliography{kappa}
\bibliographystyle{plain}

\end{document}